\newcommand{\dualV}[2]{{\fourIdx{}{V^\ast}{}{V}{\langle #1, #2 \rangle}}}
\newcommand{\sol}{\mathrm{sol}}
\newcommand\divv{{\rm div}\,}
\newtheorem{theorem}{Theorem}[section]
\newtheorem{proposition}[theorem]{Propostion}
\newtheorem{lemma}[theorem]{Lemma}
\newtheorem{definition}[theorem]{Definition}
\newtheorem{remark}[theorem]{Remark}
\newcommand{\norm}[3]{\|  #1 {\| }_{#2}^{#3}}
\newcommand{\lb}{\,\langle}
\newcommand{\rb}{\rangle\,}
\newcommand{\dirilsk}[3]{{\bigl( \! \bigl( #1 , #2 \bigr) \! \bigr)}_{#3}}
\newcommand\dela[1]{}
\newcommand{\embed}{\hookrightarrow }
\newcommand{\dist}{\mathrm{dist}}
\newcommand{\Var}{\mathrm{Var}}
\newcommand{\rH}{\mathrm{H}}
\newcommand{\rK}{\mathrm{K}}
\newcommand{\rV}{\mathrm{V}}
\DeclareMathOperator{\Leb}{\mathrm{Leb}}
\numberwithin{equation}{section}
\begin{document}
\title{Reflection of Stochastic Evolution Equations in Infinite Dimensional Domains}
\author{ Zdzis{\l}aw Brze{\'z}niak$^{1}$ and Tusheng Zhang$^{2}$,  }

\footnotetext[1]{\ Department of Mathematics, University of York, Heslington, YO10 5DD,
York, United Kingdom, email: zdzislaw.brzezniak@york.ac.uk}
\footnotetext[2]{\ School of Mathematics, University of Science and Technology of China,
Hefei, Anhui, China,  email: tusheng.zhang@manchester.ac.uk }
 \maketitle

{\bf Abstract}: In this paper, we establish the  existence and  the uniqueness
of solutions of stochastic evolution equations (SEEs) with reflection in an infinite dimensional ball. Our framework is sufficiently general to include e.g. the stochastic
Navier-Stokes equations.

\vskip 0.3cm
\noindent {\bf Key Words:} Stochastic evolution equations,
stochastic evolution equations with reflection, random measures, Sobolev embedding.

\vskip 0.3cm
 \noindent {\bf AMS Subject
Classification:} Primary 60H15 Secondary 60J60, 35R60.

\section{Introduction}
\setcounter{equation}{0} Let $D:=B(0,1)$ be the open unit ball in a separable Hilbert space $H$ endowed with an inner product denoted by $(\cdot,\cdot)$.
Let $A$ be a self-adjoint, positive definite operator on the Hilbert space $H$ and let $B$ be a certain unbounded bilinear map from $H\times H$ to $H$.

 In this article we  consider the following stochastic evolution equations (SEEs) with reflection. To be more specific,
 given a filtered probability space $\mathfrak{P}=(\Omega,
\mathcal{F},\mathbb{F},\mathbb{P})$,  satisfying the so-called usual condition,  a real-valued Brownian Motion  $W=(W(t):\, t\geq 0)$    defined on $\mathfrak{P}$,
and an element $u_0\in\bar{D}$ we will be looking for a pair $(u, L)$ which solves, in the sense that will be made precise below,  the following initial value  problem
 \begin{align}\label{original equation}
du(t)&+Au(t)\,dt=f(u(t))\,dt+B(u(t),u(t))\,dt+\sigma(u(t))\,d{W}(t)+dL(t),\;\;t\geq 0,\\
\label{eqn-ic}
u(0)&=u_0,
\end{align}
where $u$ and $L$ are  respectively $D$ and $H$-valued adapted stochastic processes,  with continuous,  and respectively, of locally  bounded variation,   see \cite[\S 17 and Theorem III.2.1, p. 358]{Dinculeanu_1967},
trajectories.
 We assume that the coefficients $f$ and $\sigma$ are
measurable maps from $H$ to $H$ and that  $B: V\times V\rightarrow V^\ast$ is a bilinear measurable function  whose properties  will be specified later. Here $V=D(A^{\frac{1}{2}})$, by $V^\ast$ we denote  the dual of $V$, we identify $H^\ast$, the dual of $H$, with $H$ so that  we have a Gelfand triple
\begin{equation}
\label{eqn-Gelfand triple}
V \embed H = H^\ast \embed V^\ast .
\end{equation}
  The following is the
definition of a solution to Problem \eqref{original equation}-\eqref{eqn-ic}.
\begin{definition}\label{def-solution} A pair $(u,L)$ is said to
be a solution of Problem (\ref{original equation})-\eqref{eqn-ic} iff the following conditions are satisfied
\begin{trivlist}
\item[(i)] $u$ is a $ \bar{D}$-valued continuous  and  $\mathbb{F}$-progressively measurable  stochastic process with $u\in L^2([0, T], V)$, for any $T>0$,   $\mathbb{P}$- a.s.
\item[(ii)] the corresponding $V$-valued process is strongly $\mathbb{F}$-progressively measurable;
\item[(iii)] $L$ is a $H$-valued,  $\mathbb{F}$-progressively measurable  stochastic process of paths of  locally bounded variation
such that $L(0)=0$ and
 \begin{equation}\label{eqn-variation finite}
 \mathbb{E}\,\bigl[ \vert \Var_{H}(L)([0, T]) \vert^2 \bigr]<+\infty,\;\;\; T\geq 0,
 \end{equation}
  where, for a function $v:[0, \infty)\to H$,  $\Var_{H}(v)([0, T])$ is  the total variation of $v$ on $[0, T]$ defined by
 \begin{equation}\label{eqn-variation}
 \Var_{H}(v)([0, T])]:= \sup  \sum_{i=1}^n \vert v(t_i)-v(t_{i-1})\vert,
 \end{equation}
 where the supremum is taken over all partitions $ 0=t_0<t_1<\ldots < t_{n-1}<t_n=T$, $ n\in \mathbb{N}$, of the interval $[0,T]$;
\item[(iii)] $(u, L)$ 
satisfies the following integral identity in  $V^\ast$,  for every $t\geq0$, $\mathbb{P}$-almost surely,

\begin{equation}\label{eqn-SEE with reflection}
\begin{aligned}
 u(t) &+\int_0^tAu(s)\,ds-\int_0^tf(u(s))\,ds-\int_0^t B(u(s),u(s))\,ds\\
&=u(0)+\int_0^t\sigma(u(s))\,dW(s)+L(t);
\end{aligned}
\end{equation}

\item[(iv)]  for every  $T>0$,  and $\phi\in C([0,T], \bar{D})$, $\mathbb{P}$-almost surely,
\begin{equation} \int_0^T ( \phi(t)-u(t), L(dt)) \geq 0.
\end{equation}
where the integral on the LHS is the  Riemiann-Stieltjes integral  of the $H$-valued function $\phi-u$ with respect to an $H$-valued bounded-variation function $L$, see
\cite[p. 47 in section 1.3.3]{Barbu+Precupanu_2012}.
\end{trivlist}
\end{definition}
\begin{remark}\label{rem-one dim BM}
Here we choose to have a one-dimensional Brownian motion  for the simplicity of the exposition. The method in this paper works for infinite dimensional Brownian motion as well.
Let us emphasize that  we do not allow the so-called gradient noise because the map $\sigma$ maps $H$ to $H$ and not $V$ to $H$. We still believe that our results are true in  the latter case under some additional coercivity assumptions, see e.g.
\cite{Brz+Mot_2013} and \cite{Brz+Dh_2018}. However, we have not verified the details. 
\end{remark}
The existence and the uniqueness of solutions of  real-valued stochastic partial differential equations (SPDEs) with reflection at zero driven by space-time white noise were obtained by Nualart and Pardoux in \cite{NP}
  for  additive noise, by Donati-Martin and Pardoux
 in \cite{MP} for general diffusion coefficient $\sigma$ without proving the uniqueness and by T. Xu and T. Zhang in \cite{XZ} for general $\sigma$ with also the proof of the uniqueness.
Various properties of the solution of the real-valued SPDEs with reflection were studied  in \cite{DMZ},\cite{DP},\cite{HP},
\cite{ZA} and \cite{Z}. SPDEs with reflection can  be used to
model the evolution of random interfaces near a hard wall. It was
proved by T. Funaki and S. Olla in \cite{FO} that the fluctuations
of
a $\nabla \phi$ interface model near a hard wall converge in law to the stationary solution of a SPDE with reflection. For stochastic Cahn-Hilliard equations with reflection, please see \cite{DebZamb}.

\vskip 0.3cm

 \indent The purpose of this paper is to establish the existence and the
uniqueness of the reflection problem of stochastic evolution equation on an infinite dimensional ball in a separable Hilbert space. Under this setting, we would like to mention two related papers. In \cite{BD-1} the authors considered a reflection problem for 2D stochastic Navier-Stokes equations with periodical boundary conditions in an infinite dimensional ball. The problem is formulated as a stochastic variational inequality, the Galerkin approximations and the Kolmogorov equations are used. A stochastic reflection problem was considered in \cite{BDT} for  stochastic evolution equations driven by additive noise on a closed convex subset in a Hilbert space. The solution is defined as a solution to a control problem.The approach is very much analytical.
\vskip 0.3cm
In this paper, we consider the reflection problem for general stochastic evolution equations with mutiplicative noise on an infinite dimensional ball in a Hilbert space. The approach  is stochastic and direct.
We will solve the reflection problem using
approximations of penalized stochastic evolution equations. To prove  convergence of the solutions of the approximating equations,  due to the lack of the comparison theorems, we need to obtain a number of good estimates for the solutions of the penalized equations. Our approach is inspired by the work in \cite{HP}. Our framework is quite general, which includes also stochastic Navier-Stokes equations.
\vskip 0.3cm
The rest of the paper is organised as follows. In Section \ref{sec-2}, we describe the setup and state the precise assumptions on the coefficients. In Section \ref{sec-approximation}, we consider approximating penalized stochastic evolution equations and obtain a number of estimates for the solution which are used later. Section \ref{sec-main} is devoted to the statement and proof of our main result, i.e. Theorem \ref{thm-main}, about the existence and the uniqueness of the stochastic reflection problem. In Section \ref{sec-Examples} we discuss a concrete example of damped Navier-Stokes Equations in the whole Euclidean domain $\mathbb{R}^2$.

\section{Framework}\label{sec-2}

\indent Now we introduce the setup of the paper and the assumptions. Let $H$ be a separable Hilbert space with the norm $|\cdot |_H$ or simply $|\cdot|$ and the inner product $( \cdot , \cdot)$. Let $A$ be a self-adjoint, positive definite operator on the Hilbert space $H$ such that there exists $\lambda_1>0$ such that
\begin{equation}\label{eqn-Poincare}
( Au,u) \geq \lambda_1 \vert u\vert^2, \;\;\; u \in D(A).
\end{equation}

 Set $V:=D(A^{\frac{1}{2}})$, the domain of the operator $A^{\frac{1}{2}}$. Then $V$ is a Hilbert space with the inner product
\begin{equation}\label{eqn-squarerootofA}
\dirilsk{u}{v}{}=( A^{\frac{1}{2}}u, A^{\frac{1}{2}}v),
\end{equation}
and the norm $\Vert \cdot\Vert$.
$V^\ast$ denotes the dual space of $V$. We also use $\dualV{\cdot}{\cdot}$  to denote the duality  between $V$ and $V^\ast$.
\vskip 0.3cm
We consider  two    measurable maps
\begin{align}\label{eqn-f}
  f &: H\rightarrow V^\ast   \\
  \label{eqn-sigma}
  \sigma &: H\rightarrow H
\end{align}
 a bilinear map
 \begin{align}\label{eqn-B} B &: V \times V \to V^\ast
 \end{align}
and the corresponding  trilinear form $\bar{b}: \times V\times V\rightarrow R$ defined by
\begin{align}\label{eqn-b}
\bar{b}(u,v,w)= \dualV{B(u,v)}{w}, \;\;\; u,v,w\in V.
\end{align}

\vskip 0.4cm

 We introduce the following assumptions\footnote{One can easily weaken the assumptions of the vector field $f$ as below. We will not dwell upon this in the current paper. \textit{There exists $\alpha \in [0,1)$ such that $f: D(A^{\frac{\alpha}2}) \to V^\ast$ is globally Lipschitz.} } that we will be using in the paper.
\begin{trivlist}
\item[\textbf{(A.1)}] There exists a constant $C$ such that
\begin{equation}\label{locally lipschitz}
\begin{aligned}
|f(u)-f(v)|_{V^\ast}+|\sigma(u)-\sigma(v)|_H\leq
C |u-v|_H, \quad\quad\quad \mbox{ for all } u, v\in H.
\end{aligned}\end{equation}
\vskip 0.3cm
\item[\textbf{(A.2)}] The form $\bar{b}$ satisfies the following conditions.
\begin{trivlist}
\item[a)] For all $u,w,v \in V$,
\begin{equation}\label{1.0}
\dualV{B(u,v)}{w}=\bar{b}(u,v,w)=-\bar{b}(u,w,v)=-\dualV{B(u,w)}{v}.
\end{equation}
\item[b)] For all $u,w,v \in V$,
\begin{equation}\label{1.1}
|\dualV{B(u,v)}{w}|=|\bar{b}(u,v,w)|\leq 2  \Vert u \Vert ^{\frac{1}{2}} |u|_H^{\frac{1}{2}}  \Vert w \Vert ^{\frac{1}{2}} |w|_H^{\frac{1}{2}}  \Vert v \Vert.
\end{equation}
\end{trivlist}

Let us observe that the constant $2$ could be replaced by any positive constant.

\vskip 0.3cm
\item[\textbf{(A.3)}]
Given is a $ \bar{D}$-valued  $\mathcal{F}_0$- measurable  random variable  $u_0$; here $ \bar{D}$ is the closed unit  ball in the Hilbert space $\rH$.

\end{trivlist}

\vskip 0.3cm
Let us note that  Assumption (A.2) particularly implies that
\begin{equation}\label{1.2}
\bar{b}(u,v,v)=0 \mbox{ i.e. }  \dualV{B(u,v)}{v}=0, \;\; u,v\in V
\end{equation}
and \begin{equation}\label{1.2-b}
  \Vert B(u,u) \Vert _{V^\ast}\leq 2  \Vert u \Vert  |u|_H,\;\; u\in V.
\end{equation}


Throughout the paper we assume that $\mathfrak{P}=(\Omega,\mathcal{F},\mathbb{F},\mathbb{P})$ is a filtered probability space with filtration $\mathbb{F}=\bigl(\mathcal{F}_t\bigr)_{t\geq 0}$, satisfying the usual hypothesis.
We also  assume that $W=\bigl(W(t)\bigr)_{t\geq 0}$ is an $\mathbb{R}$-valued  Wiener process on $\mathfrak{P}$. It should not be difficult to extend the results to a cylindrical Wiener process on some separable Hilbert space
$\rK$ provided that $\sigma$ is a map $H \to \gamma(\rK,\rH)$ such that the corresponding part of assumption \eqref{locally lipschitz} is replaced by
\begin{equation}\label{locally lipschitz-2}
\begin{aligned}
\Vert \sigma(u)-\sigma(v) \Vert_{\gamma(\rK,\rH)}\leq
C|   u-v |_{\rH} , \mbox{ for  all } u, v\in \rH  \mbox{ and for some }C>0.
\end{aligned}\end{equation}
Here, by $\gamma(\rK,\rH)$ we denote the space of all Hilbert-Schmidt operators from $\rK$ to $\rH$ and by
$\Vert \cdot \Vert_{\gamma(\rK,\rH)}$ we denote the corresponding Hilbert-Schmidt norm.

For a given $T>0$ by $X_T$ we will denote the following separable Banach space
\[
X_T=C([0,T];\rH)\cap L^2([0,T];\rV)
\]
endowed with the natural norm:
\begin{equation}
\Vert u \Vert^2_{X_T}:= \sup_{t \in [0,T]} | u(t)|^2_{\rH}+ \int_0^T \Vert u(s)\Vert^2\, ds.
\end{equation}

By $\mathbb{M}^2(0,T)$ we will denote the space of all $\rH$-valued continuous
$\mathbb{F}$-progressively measurable processes $u$ such that $u$ has a  $\rV$-valued $\mathcal{L}[0,T] \otimes \mathbb{F}$-measurable version $\tilde{u}$, where
by $\mathcal{L}[0,T]$  we denote the $\sigma$-field of Lebesgue measurable sets on the interval $[0,T]$, endowed with the following  norm
\begin{equation}
\label{eqn-M^2(0,T)-norm}
\Vert u \Vert^2_{\mathbb{M}^2(0,T)}:= \mathbb{E} \Bigl[ \sup_{t \in [0,T]} | u(t)|^2_{\rH}+ \int_0^T \Vert \tilde{u}(s)\Vert^2
\,ds \Bigr].
\end{equation}
It is known that $\mathbb{M}^2(0,T)$ is a separable Banach space.

Let us observe that in a non-rigorous way
\[
\mathbb{M}^2(0,T)=L^2(\Omega; \mathcal{L}[0,T]   \otimes \mathbb{F};\Leb \otimes \mathbb{P} ;X_T).
\]
Throughout the paper, $C$ will denote a generic constant whose value may be different from line to line.

\section{The existence and the uniqueness of solutions to an approximated problem}\label{sec-approximation}

Let us recall that  $D:=B(0,1)$ is the open unit ball in the  separable Hilbert space $H$. We endow it with the metric inherited from $H$.
Let us  introduce a function $\pi : H \to  \bar{D}$, called the projection onto $\bar{D}$, defined by,
for $y\in H$,  by
\begin{equation}\label{eqn-projection pi}
\pi(y)=\left \{\begin{array}{ll} y,& \mbox{if $|y|\leq 1$,}\\
\frac{y}{|y|}, & \mbox{if $|y|>1$.}\end{array}\right.
\end{equation}
Note that $\pi(y)$ and  $y-\pi(y)$ have always the same direction as  $y$. In particular,
\begin{equation}
y-\pi(y)=\lambda(\vert y\vert) y, \;\;y \in H,
\end{equation}
where the function $\lambda:[0,\infty) \to [0,1]$ is given by
\[
\lambda(r)=
\begin{cases}
  0, & \mbox{if }  r \in [0,1], \\
  1-\frac1r, & \mbox{if } r>1.
\end{cases}
\]

\begin{remark}\label{rem-projection pi}
It seems important to observe that $\pi-I$ (here $I$ stands for the identity) can be seen as the minus gradient of the function $\phi$ defined by the following formula:
\begin{equation}\label{eqn-phi}
\phi(y)=\frac12 \vert \dist(u,D)\vert^2=\left \{\begin{array}{ll} 0,& \mbox{if $|y|\leq 1$,}\\
\frac12 \bigl( \vert y\vert -1\bigr)^2, & \mbox{if $|y|>1$.}\end{array}\right.
\end{equation}
In other words,
\begin{equation}\label{eqn-phi gradient}
\pi(x)-x=-\nabla \phi (x),\;\;\; x\in H.
\end{equation}
Strictly speaking, identity \eqref{eqn-phi gradient} is not true as the function $\phi$ is not differentiable for $x \in \mathbb{S}:= \{ x \in H: \vert x \vert =1\}$.
\\
Other choices of $\phi$ are possible leading  of course to modifying the vector field $\pi$.
\end{remark}

The following Lemma states some  straightforward properties of the projection  map $\pi$ that will be used later.
\begin{lemma}\label{lem-2.1} The function $\pi$ defined in \eqref{eqn-projection pi} has the following properties.
\begin{trivlist}
\item[(i)] The map $\pi$ is globally Lipschitz in the sense that
\begin{equation}\label{e01}
|\pi (x)-\pi (y)|\leq 2|x-y|, \;\; x, y\in H.
\end{equation}
\item[(ii)] For all $\;\; x\in H$,
\begin{equation}\label{e02}
\begin{split}
(\pi (x), x-\pi (x))&= |x-\pi(x)|, \\
(x, x-\pi (x))&= \vert x \vert |x-\pi(x)|.
\end{split}
\end{equation}
\item[(iii)] For $x\in H$ and $y\in \bar{D}$,
\begin{equation}\label{e03}
(x-y, x-\pi (x))\geq 0.
\end{equation}
\end{trivlist}
\end{lemma}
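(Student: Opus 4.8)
The plan is to treat the three claims separately, exploiting the elementary fact that by construction $\pi(x)$, $x$, and the reflection vector $x-\pi(x)$ are all nonnegative scalar multiples of one another. The representation $x-\pi(x)=\lambda(\vert x\vert)\,x$ recorded above, with $\lambda(\vert x\vert)\geq 0$, will do most of the work for parts (ii) and (iii), while part (i) reduces to a short case analysis according to the two regimes $\vert x\vert\leq 1$ and $\vert x\vert>1$.

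For (i) I would distinguish the three possibilities according to whether $\vert x\vert$ and $\vert y\vert$ lie in $[0,1]$ or in $(1,\infty)$. When both points lie in $\bar D$ the map is the identity and the inequality holds with constant $1$. When both lie outside $\bar D$, writing
\[
\frac{x}{\vert x\vert}-\frac{y}{\vert y\vert}=\frac{x-y}{\vert x\vert}+y\Bigl(\frac{1}{\vert x\vert}-\frac{1}{\vert y\vert}\Bigr)
\]
and using the reverse triangle inequality $\bigl|\,\vert x\vert-\vert y\vert\,\bigr|\leq\vert x-y\vert$ together with $\vert x\vert>1$ yields the bound $2\vert x-y\vert$. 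The only delicate case is the mixed one, which by symmetry we may take to be $\vert x\vert\leq 1<\vert y\vert$: here I would estimate $\bigl|\,x-\tfrac{y}{\vert y\vert}\,\bigr|\leq\vert x-y\vert+(\vert y\vert-1)$ and then observe that $\vert y\vert-1\leq\vert y\vert-\vert x\vert\leq\vert x-y\vert$ precisely because $\vert x\vert\leq 1$, which again produces the factor $2$. Alternatively one may simply invoke the general fact that the metric projection onto a closed convex subset of a Hilbert space is nonexpansive, giving even the sharp constant $1$.

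For (ii) I would substitute $x-\pi(x)=\lambda(\vert x\vert)\,x$ and $\pi(x)=(1-\lambda(\vert x\vert))\,x$ directly, so that both identities collapse to scalar computations. One finds $(x,x-\pi(x))=\lambda(\vert x\vert)\vert x\vert^2$ and $\vert x-\pi(x)\vert=\lambda(\vert x\vert)\vert x\vert$, whence $(x,x-\pi(x))=\vert x\vert\,\vert x-\pi(x)\vert$. For the first line one checks $(1-\lambda(\vert x\vert))\vert x\vert=1$ whenever $\lambda(\vert x\vert)>0$, i.e. when $\vert x\vert>1$, the case $\lambda(\vert x\vert)=0$ being trivial since then $x-\pi(x)=0$ and both sides vanish.

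Finally, for (iii) I would decompose
\[
(x-y,x-\pi(x))=(x,x-\pi(x))-(y,x-\pi(x))
\]
and combine part (ii) with the Cauchy--Schwarz inequality. Since $y\in\bar D$ gives $\vert y\vert\leq 1$, one has $(y,x-\pi(x))=\lambda(\vert x\vert)(y,x)\leq\lambda(\vert x\vert)\vert x\vert=\vert x-\pi(x)\vert$, while part (ii) gives $(x,x-\pi(x))=\vert x\vert\,\vert x-\pi(x)\vert\geq\vert x-\pi(x)\vert$, because $x-\pi(x)\neq 0$ forces $\vert x\vert>1$. Subtracting yields the claimed nonnegativity. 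I expect no genuine obstacle here; the only point requiring care is correctly tracking the constant in (i) and keeping the two regimes $\vert x\vert\leq 1$ and $\vert x\vert>1$ separated throughout.
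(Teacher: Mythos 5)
Your proof is correct, and it is essentially the argument the paper has in mind: the paper states this lemma without proof as a collection of ``straightforward properties,'' and your use of the radial representation $x-\pi(x)=\lambda(\vert x\vert)\,x$ (which the paper records immediately before the lemma) together with the two-regime case analysis is exactly the intended elementary route. Your closing remark in part (i) is also apt: since $\bar D$ is closed and convex, the metric projection is nonexpansive, so the constant $2$ in \eqref{e01} could in fact be replaced by $1$.
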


\vskip 0.4cm

For every $n \in \mathbb{N}$, we consider the following penalized stochastic evolution equation:
\begin{equation}\label{eqn-2.1}
\begin{aligned}
u^n(t)&=u_0^n-\int_0^tAu^n(s)\,ds+\int_0^t\sigma(u^n(s))\,dW(s)\\
&+\int_0^tf(u^n(s))\,ds+\int_0^tB(u^n(s),u^n(s))\,ds \nonumber \\
 &-n\int_0^t(u^n(s)-\pi(u^n(s)))\,ds.\nonumber
\end{aligned}\end{equation}
Using the function $\phi$ from Remark \ref{rem-projection pi},  the above equation can be written in the following differential form
\begin{align}\label{eqn-2.1'}
du^n(t)&+Au^n(t)\,dt=\sigma(u^n(t))\,dW(t)\\
&+ \bigl[ f(u^n(t))\,+B(u^n(t),u^n(t))\, -n \nabla\phi  (u^n(s))\bigr] \,dt;\nonumber
\\
u^n(0)&=u_0^n.
\end{align}

 There exists a unique  solution $u^n$  in the framework of the Gelfand triple \eqref{eqn-Gelfand triple}. This can be proved along the same lines as the proof of the existence and the uniqueness of solutions of stochastic Navier-Stokes equations.
For the sake of completeness let us formulate the result  whose proof can be traced back to many papers, see \cite[Theorem 5.1 and Corollary 7.7]{Brz+Mot_2013} and \cite{M+Ch_2010} and references therein.
\begin{theorem} \label{thm-existence approximation}
Let us assume that  assumptions \textbf{(A.1)}-\textbf{(A.3)} are satisfied. Then there exists a unique solution
$u^n $ of problem (\ref{eqn-2.1}) such that  for $\mathbb{P}$-almost all $\omega \in \Omega $ the trajectory $u^n(\cdot , \omega )$ is equal almost everywhere to a continuous $H$-valued function defined on $[0,\infty)$
and, for every $T>0$,
\begin{equation}
\label{eqn-energy inequality}
  \mathbb{E} \Bigl[ \sup_{t \in [0,T]} |u^n(t){|}_{H}^{2} + \int_{0}^{T}\norm{u^n(t)}{}{2} \, dt  \Bigr]
  < \infty .
\end{equation}
Moreover,  this unique solution satisfies the following inequalities.
For all $p\geq 2$, $T>0$ 
 and $n\in \mathbb{N}$,
there exist   positive constants ${C}_{n}(p,T)$ and ${\bar{C}}_{n}(p,T)$  such that
\begin{equation} \label{E:H_estimate}
 \mathbb{E} \bigl( \sup_{0 \le s \le T } |u^{n} (s){|}_{H}^{p} \bigr) \le {C}_{n}(p,T) .
\end{equation}
and
\begin{equation} \label{E:HV_estimate}
  \mathbb{E} \bigl[ \int_{0}^{T} |u^{n} (s){|}_{H}^{p-2} \norm{ u^{n} (s)}{}{2} \, ds \bigr] \le {\bar{C}}_{n}(p,T)  .
\end{equation}
Finally, this unique solution satisfies, $\mathbb{P}$-almost surely,  the following inequality,
\begin{equation}
\label{eqn-monotonicity}
\lb u^n(t), u^n(t)-\pi(u^n(t))\rb \geq 0, \;\; t \geq 0.
\end{equation}
\end{theorem}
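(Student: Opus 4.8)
The plan is to cast \eqref{eqn-2.1} as an abstract stochastic evolution equation in the Gelfand triple \eqref{eqn-Gelfand triple} driven by the drift operator
\[
\mathcal{A}_n(u):=-Au+f(u)+B(u,u)-n\nabla\phi(u)=-Au+f(u)+B(u,u)+n\bigl(\pi(u)-u\bigr),
\]
and to apply the variational existence-and-uniqueness theory for \emph{locally monotone} coefficients, exactly as for the stochastic Navier--Stokes equations; see \cite[Theorem 5.1]{Brz+Mot_2013} and \cite{M+Ch_2010}. The only term absent from the Navier--Stokes setting is the penalization $n(\pi-I)$. By Lemma \ref{lem-2.1}(i) the map $u\mapsto \pi(u)-u$ is globally Lipschitz from $H$ to $H$, and since $\phi$ in \eqref{eqn-phi} is convex its gradient is monotone, so that $-n\nabla\phi$ is a dissipative perturbation; consequently this term is harmless and can only improve the monotonicity and coercivity estimates.

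First I would verify the structural hypotheses of the abstract theorem. For \textbf{coercivity}, using $\langle Au,u\rangle=\Vert u\Vert^2$, the cancellation $\langle B(u,u),u\rangle=0$ from \eqref{1.2}, the identity $(u,u-\pi(u))=|u|\,|u-\pi(u)|\geq 0$ from Lemma \ref{lem-2.1}(ii) (so the penalty contributes a nonpositive term), and the Lipschitz growth of $f,\sigma$ from \textbf{(A.1)} together with Young's inequality, one obtains
\[
2\,\langle \mathcal{A}_n(u),u\rangle+|\sigma(u)|^2\leq -\tfrac32\Vert u\Vert^2+C\bigl(1+|u|^2\bigr),\qquad u\in V,
\]
with $C$ independent of $n$. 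For \textbf{local monotonicity}, the decomposition $B(u,u)-B(v,v)=B(u-v,u)+B(v,u-v)$ together with \eqref{1.2} (which kills the second term when paired with $u-v$) and estimate \eqref{1.1} gives $|\bar b(u-v,u,u-v)|\leq 2\Vert u-v\Vert\,|u-v|\,\Vert u\Vert$, whence, after Young's inequality,
\[
2\,\langle \mathcal{A}_n(u)-\mathcal{A}_n(v),u-v\rangle\leq -\Vert u-v\Vert^2+C\bigl(1+\Vert u\Vert^2\bigr)|u-v|^2,
\]
the penalization contributing $-n\bigl(u-v,(u-\pi(u))-(v-\pi(v))\bigr)\leq 0$ by monotonicity of $\pi-I$ and hence being discarded. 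Finally the \textbf{growth} bound $\Vert \mathcal{A}_n(u)\Vert_{V^\ast}\leq C_n\bigl(1+\Vert u\Vert\bigr)\bigl(1+|u|\bigr)$ follows from \eqref{1.2-b} and \textbf{(A.1)}. These are precisely the conditions under which the cited theory yields a unique solution with continuous $H$-valued paths satisfying \eqref{eqn-energy inequality}.

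The higher moment bounds \eqref{E:H_estimate} and \eqref{E:HV_estimate} I would obtain by applying It\^o's formula to $|u^n(t)|^p$, inserting the coercivity estimate, controlling the martingale part with the Burkholder--Davis--Gundy inequality, and closing by Gronwall's lemma; since the penalty term enters these computations with a nonpositive sign, the resulting constants $C_n(p,T),\bar C_n(p,T)$ are in fact uniform in $n$, although only their finiteness for fixed $n$ is asserted here. The pointwise inequality \eqref{eqn-monotonicity} is then immediate: applying Lemma \ref{lem-2.1}(ii) to $x=u^n(t)$ gives $\lb u^n(t),u^n(t)-\pi(u^n(t))\rb=|u^n(t)|\,|u^n(t)-\pi(u^n(t))|\geq 0$ for every $t\geq 0$, $\mathbb{P}$-almost surely, by path continuity of $u^n$.

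The main obstacle is the quadratic nonlinearity $B$: it renders the drift only locally, not globally, monotone, with the local Lipschitz constant growing like $\Vert u\Vert^2$. Consequently neither uniqueness nor the passage to the limit in the underlying Galerkin approximation can rely on the classical global-monotonicity theory of Pardoux and Krylov--Rozovskii; one must instead use the a priori $H$-bound furnished by coercivity to dominate this growing constant, exactly as in the stochastic Navier--Stokes analysis carried out in the cited references.
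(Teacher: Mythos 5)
Your proposal is correct and follows essentially the same route as the paper: the paper likewise treats \eqref{eqn-2.1} as a variational SEE in the Gelfand triple \eqref{eqn-Gelfand triple} and invokes the locally monotone (stochastic Navier--Stokes type) well-posedness theory of \cite{Brz+Mot_2013} and \cite{M+Ch_2010}, with the penalization absorbed as a Lipschitz, monotone perturbation, and it deduces \eqref{eqn-monotonicity} from Lemma \ref{lem-2.1} (using \eqref{e03} with $y=0$, equivalent to your use of part (ii)). The only difference is one of exposition: you spell out the coercivity, local-monotonicity and growth verifications that the paper leaves to the cited references.
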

Let us observe that property \eqref{eqn-monotonicity} follows from \eqref{e03} with $y=0$.

In Lemma \ref{lem-2.2} we will strengthen the assertion \eqref{E:H_estimate} from Theorem \ref{thm-existence approximation} by  showing,  in particular, that for   every $T>0$,  $\sup_n C_{n}(4,T)<\infty$. For that aim we will use the fact that $C_{n}(8,T)<\infty$ for every $n\in\mathbb{N}$.

In the next section we will show that the limit of $u^n$, as $n\to \infty$,  exists and  is a solution of SEE with reflection $(\ref{original
equation})$.

We begin with  a number of estimates for the sequence $(u^n)_{n\geq 1}$.

\begin{lemma}\label{lem-2.2}
For every $T>0$ there exist  positive constants $K_0=K_0(T)$ and $K_1=K_1(T)$ such that the following estimates hold:
\begin{equation}\label{eqn-2.2}
\sup_n\mathbb{E}\,[\sup_{t \in [0,T]}|u^n(t)|_H^4]\leq K_0,
\end{equation}
\begin{equation}\label{2.3}
\mathbb{E}\bigl[\int_0^T|u^n(t)|_H^2 \lb u^n(t),u^n(t)-\pi(u^n(t))\rb \, dt \bigr] \leq \frac{K_1}{n}, \;\; n\in \mathbb{N}.
\end{equation}
\end{lemma}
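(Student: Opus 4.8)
The plan is to obtain an energy identity for $|u^n(t)|_H^4$, since all quantities appearing in \eqref{eqn-2.2}--\eqref{2.3} are of fourth order. First I would apply the It\^o formula in the Gelfand triple \eqref{eqn-Gelfand triple} to $|u^n(t)|_H^2$; using $\dualV{Au^n}{u^n}=\|u^n\|^2$, the cancellation $\bar b(u^n,u^n,u^n)=0$ from \eqref{1.2}, and the nonnegativity of the penalization term from \eqref{eqn-monotonicity}, this exhibits $Y^n:=|u^n|_H^2$ as a real continuous semimartingale with an explicit decomposition. Applying the one-dimensional It\^o formula to $x\mapsto x^2$ and $Y^n$ then gives
\begin{equation}
\begin{aligned}
|u^n(t)|_H^4 &+4\int_0^t|u^n|_H^2\|u^n\|^2\,ds+4n\int_0^t|u^n|_H^2\lb u^n,u^n-\pi(u^n)\rb\,ds\\
&=|u_0^n|_H^4+4\int_0^t|u^n|_H^2\dualV{f(u^n)}{u^n}\,ds+4\int_0^t|u^n|_H^2(u^n,\sigma(u^n))\,dW(s)\\
&\quad+\int_0^t\bigl(2|u^n|_H^2|\sigma(u^n)|_H^2+4(u^n,\sigma(u^n))^2\bigr)\,ds.
\end{aligned}
\end{equation}
The two integrals carrying $n$ and the gradient term on the left are all nonnegative; since they will be discarded on this favourable side, every constant below is independent of $n$.

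To prove \eqref{eqn-2.2} I would estimate the right-hand side. By \textbf{(A.1)} one has the linear bounds $|f(u)|_{V^\ast}\le |f(0)|_{V^\ast}+C|u|_H$ and $|\sigma(u)|_H\le |\sigma(0)|_H+C|u|_H$; combined with $|\dualV{f(u^n)}{u^n}|\le |f(u^n)|_{V^\ast}\|u^n\|$ and Young's inequality, the $f$-term is dominated by $2\int_0^t|u^n|_H^2\|u^n\|^2\,ds+C\int_0^t(|u^n|_H^2+|u^n|_H^4)\,ds$, the first part being absorbed into the gradient term $4\int_0^t|u^n|_H^2\|u^n\|^2\,ds$. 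The $\sigma$-terms are controlled by $C\int_0^t(|u^n|_H^2+|u^n|_H^4)\,ds$ since $(u,\sigma(u))^2\le |u|_H^2|\sigma(u)|_H^2$. For the martingale I would take $\sup_{s\le t}$, apply the Burkholder--Davis--Gundy inequality, bound the quadratic-variation integrand by $|u^n|_H^4(u^n,\sigma(u^n))^2\le C\sup_{r\le t}|u^n(r)|_H^4\,(|u^n|_H^2+|u^n|_H^4)$, and use Young's inequality to split off a small multiple of $\mathbb{E}[\sup_{r\le t}|u^n(r)|_H^4]$. Choosing the Young parameter small absorbs this supremum into the left-hand side; then $|u^n|_H^2\le 1+|u^n|_H^4$ and Gronwall's lemma applied to $t\mapsto\mathbb{E}[\sup_{r\le t}|u^n(r)|_H^4]$ yield \eqref{eqn-2.2}, with $K_0$ depending only on $T$ and on the uniformly bounded initial data (so that $\sup_n|u_0^n|_H<\infty$).

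Estimate \eqref{2.3} drops out of the same identity: taking expectations \emph{without} the supremum kills the stochastic integral, and discarding the nonnegative terms $|u^n(t)|_H^4$ and $4\int_0^t|u^n|_H^2\|u^n\|^2\,ds$ while retaining the penalization term gives
\begin{equation}
4n\,\mathbb{E}\int_0^T|u^n|_H^2\lb u^n,u^n-\pi(u^n)\rb\,ds\le |u_0^n|_H^4+C\,\mathbb{E}\int_0^T\bigl(|u^n|_H^2+|u^n|_H^4\bigr)\,ds.
\end{equation}
By \eqref{eqn-2.2} the right-hand side is bounded uniformly in $n$ by a constant $K_1=K_1(T)$, and dividing by $4n$ (absorbing the factor $4$ into $K_1$) yields \eqref{2.3}.

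The main obstacle is the rigorous justification, not the algebra. The absorption of $\mathbb{E}[\sup_{r\le t}|u^n|_H^4]$ into the left-hand side, and the vanishing of the expectation of the stochastic integral in the proof of \eqref{2.3}, are legitimate only once these quantities are known to be a priori finite; this is exactly the role of the observation preceding the lemma. Indeed, Theorem \ref{thm-existence approximation} supplies $C_n(4,T)<\infty$ and $C_n(8,T)<\infty$ for each fixed $n$, so that $\mathbb{E}[\sup_{[0,T]}|u^n|_H^4]<\infty$ and $\mathbb{E}\int_0^T|u^n|_H^4(u^n,\sigma(u^n))^2\,ds<\infty$, which makes both the Burkholder--Davis--Gundy step and the martingale property valid for each $n$. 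The gain of the lemma over Theorem \ref{thm-existence approximation} is precisely the uniformity in $n$, which comes entirely from discarding the two $n$-dependent terms on the side where they carry a favourable sign.
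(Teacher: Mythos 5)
Your proposal is correct and follows essentially the same route as the paper: the It\^o formula for $|u^n(t)|_H^4$ (the paper applies it directly to $\psi(z)=|z|_H^4$ via \cite{Pardoux79}, whereas you derive it in two steps through $Y^n=|u^n|_H^2$, a harmless technical variant), the sign observations for the $Au^n$ and penalization terms, absorption of the $f$-term via Young's inequality into the gradient term, the Burkholder estimate with absorption of $\frac12\mathbb{E}[\sup|u^n|_H^4]$, Gronwall for \eqref{eqn-2.2}, and then \eqref{2.3} by retaining the $4n$-term and dividing by $n$. You also correctly identify, as the paper does, that the martingale property and the legitimacy of the absorption step rest on the a priori finiteness of the higher moments $C_n(8,T)<\infty$ from Theorem \ref{thm-existence approximation}.
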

\begin{remark}\label{lem-lem-2.2} Let us recall that  $\pi(u^n(t))$ and $u^n(t)-\pi(u^n(t))$ have the same directions. Thus,  $\mathbb{P}$-almost surely, for all $ t \geq 0$,
\begin{align*}
\lb u^n(t),u^n(t)-\pi(u^n(t))\rb&=\lb u^n(t)-\pi(u^n(t)),u^n(t)-\pi(u^n(t))\rb\\ &+ \lb \pi(u^n(t)),u^n(t)-\pi(u^n(t))\rb
\geq \vert u^n(t)-\pi(u^n(t) \vert^2_H.
\end{align*}
Therefore, the \textit{a priori} estimate \eqref{2.3} implies the following inequality which will be used later in the proof of  Lemma \ref{lem-2.4}. There exists a constant $K_1=K_1(T)>0$ such that
\begin{equation}\label{eqn-2.3'}
\mathbb{E}\bigl[\int_0^T|u^n(t)|_H^2 \vert u^n(t)-\pi(u^n(t)) \vert^2_H  \, dt \bigr] \leq \frac{K_1}{n}, \;\; n\in \mathbb{N}.
\end{equation}
\end{remark}

\vskip 0.3cm

The proof of inequality \eqref{eqn-2.2} is similar to the proof of the  estimates (5.4) in Lemma 5.3  from   \cite[Appendix A]{Brz+Mot_2013}.

\begin{proof}[Proof of Lemma \ref{lem-2.2}] Let us choose and fix $T>0$.  Let us fix $n\in \mathbb{N}$ and $u_0 \in H$. Let $\psi(z)=|z|_H^4, z\in H$.  It is easy to see that $\psi$  is of $C^2$ class and
	\begin{align*}
\psi^{\prime}(z)=4|z|_H^2z \mbox{ and }\psi^{\prime\prime}(z)=8z\otimes z+4|z|_H^2I_H, \;\; z\in H,
	\end{align*} where $I_H$ stands for the identity operator on $H$.  Applying  the It\^o formula from \cite{Pardoux79} and assumption (\ref{1.2}) we have
		\begin{equation}\label{2.4}\begin{aligned}
|u^n(t)|_H^4&=|u^n(0)|_H^4-4\int_0^t |u^n(s)|_H^2\lb u^n(s), Au^n(s)\rb\, ds\nonumber\\
  &+4 \int_0^t |u^n(s)|_H^2\lb u^n(s), \sigma(u^n(s))\rb dW(s)+4\int_0^t |u^n(s)|_H^2\lb u^n(s), f(u^n(s))\rb \,ds\nonumber\\
  &-4n\int_0^t |u^n(s)|_H^2 \,\lb u^n(s), u^n(s)-\pi(u^n(s))\rb \,ds+4 \int_0^t \lb u^n(s),\sigma(u^n(s))\rb ^2\,ds\nonumber\\
  &+2 \int_0^t |u^n(s)|_H^2\lb \sigma(u^n(s)), \sigma(u^n(s))\rb \,ds.
  \end{aligned}\end{equation}

In the first instance, we do not know whether the process $M$ defined by

\[M(t):=\int_0^t |u^n(s)|_H^2\lb u^n(s), \sigma(u^n(s))\rb dW(s), \;\; t\geq 0,\] is a martingale.
Fortunately, this is the case as it follows from inequality \eqref{E:HV_estimate}.

\dela{
Thus, as in for instance in the proof of Lemma 7.3  \cite{Brz+Mot_2013} we will use a sequence of stopping times as follows:
\begin{equation}  \label{eqn-stopping time}
   {\tau }_{N} := \inf \{ t \in [0,T] : \int_0^t |u^n(s){|}_{H}^{8} \, ds  >N  \} ,
\qquad N \in \mathbb{N} .
\end{equation}
Let us recall, that by definition $\inf \emptyset =T$. Let us also observe that in view of inequality \eqref{E:HV_estimate}, $\tau_N>0$ with $\mathbb{P}=1$.
}

  Observe that by \eqref{eqn-monotonicity} we have $\lb u^n(s), u^n(s)-\pi(u^n(s))\rb \geq 0$ for all $s\in [0,T]$.   We also note  that
  \begin{equation}4\int_0^t |u^n(s)|_H^2\lb u^n(s), Au^n(s)\rb\, ds \geq 0.\end{equation}
Since the function $f: H\rightarrow V^*$ is of linear growth, there exists a constant $C$ such that
\begin{equation}\label{f-estimate}
|\lb f(u), u \rb|\leq C\Vert u\Vert (1+|u|_H), \quad\quad u\in V.
\end{equation}
By the Young  inequality and inequality (\ref{f-estimate}) we have
\begin{equation}\label{f-estimate-1}\begin{aligned}
&4\int_0^t |u^n(s)|_H^2\lb u^n(s), f(u^n(s))\rb \,ds\nonumber\\
&\leq 2\int_0^t \Vert u^n(s)\Vert^2|u^n(s)|_H^2 \,ds +C\int_0^t (|u^n(s)|_H(1+|u^n(s)|_H))^2 \,ds\nonumber\\
&=2\int_0^t |u^n(s)|_H^2\lb u^n(s), Au^n(s)\rb \,ds +C\int_0^t (|u^n(s)|_H(1+|u^n(s)|_H))^2 \,ds
\end{aligned}\end{equation}

  Rearranging terms and taking the expectation in (\ref{2.4}) yield
\begin{align}\label{2.5}
&\hspace{-1.5truecm}\mathbb{E}\,[\sup_{0\leq r\leq t}|u^n(r)|_H^4]+4n\mathbb{E}\,[\int_0^t |u^n(s)|_H^2\lb u^n(s), u^n(s)-\pi(u^n(s))\rb \,ds]\nonumber\\
&\leq |u^n(0)|_H^4+4\mathbb{E}\,[\sup_{0\leq r\leq t}|\int_0^r |u^n(s)|_H^2\lb u^n(s), (u^n(s))\rb dW(s)|]\nonumber\\
  &+4\mathbb{E}\,[\int_0^t|u^n(s)|_H^2|\lb u^n(s), f(u^n(s))\rb |ds]+4\mathbb{E}\,[\int_0^t \lb u^n(s),\sigma(u^n(s))\rb ^2\,ds]\nonumber\\
  &+2\mathbb{E}\,[\int_0^t |u^n(s)|_H^2\lb \sigma(u^n(s)), \sigma(u^n(s))\rb \,ds].
  \end{align}
  By the Burkholder inequality,
  \begin{align}\label{2.6}
  &\hspace{-1.5truecm} \mathbb{E}\,[\sup_{0\leq r\leq t}|\int_0^r |u^n(s)|_H^2\lb u^n(s), \sigma(u^n(s))\rb dW(s)|]\nonumber\\
  &\leq  C_1\mathbb{E}\,[(\int_0^t |u^n(s)|_H^4\lb u^n(s), \sigma(u^n(s))\rb ^2\,ds)^{\frac{1}{2}}]\nonumber\\
  &\leq C_1\mathbb{E}\,[(\sup_{0\leq r\leq t}|u^n(r)|_H^2) (\int_0^t \lb u^n(s), \sigma(u^n(s))\rb ^2\,ds)^{\frac{1}{2}}]\nonumber\\
  &\leq  \frac{1}{2}\mathbb{E}\,[(\sup_{0\leq r\leq t}|u^n(r)|_H^4)]+C_2\mathbb{E}\,[\int_0^t \lb u^n(s), \sigma(u^n(s))\rb ^2\,ds].
  \end{align}
  By the linear growth of $f$, $\sigma$, and substituting (\ref{2.6}) into (\ref{2.5}) we obtain that
  \begin{align}\label{2.7}
&\hspace{-1.5truecm}\mathbb{E}\,[\sup_{0\leq r\leq t}|u^n(r)|_H^4]+4n\mathbb{E}\,[\int_0^t |u^n(s)|_H^2\lb u^n(s), u^n(s)-\pi(u^n(s))\rb \,ds]\nonumber\\
&\leq C|u^n(0)|_H^4+C\mathbb{E}\,[\int_0^t (1+|u^n(s)|_H^4)\,ds].
  \end{align}
By applying the Gronwall Lemma the above inequality   implies inequality (\ref{eqn-2.2}).  Finally, combination of inequalities \eqref{2.7} \eqref{eqn-2.2} implies inequality (\ref{2.3}).
Hence, the proof of Lemma \ref{lem-2.2} is complete.
  \end{proof}

\begin{lemma}\label{lem-2.3}
For every $T>0$ there exist constants $M_1=M_1(T)$ and $M_2=M_2(T)$ such that
\begin{equation}\label{ee2.2}
\sup_n\mathbb{E}\,[\bigg (n\int_0^T|u^n(s)-\pi(u^n(s))|_H\,ds\bigg )^2\,]\leq M_1,
\end{equation}
and
\begin{equation}\label{ee2.2-1}
\sup_n\mathbb{E}\,[\int_0^T \Vert u^n(s) \Vert ^2\,ds]\leq M_2.
\end{equation}
\end{lemma}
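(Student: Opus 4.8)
The plan is to prove the energy bound \eqref{ee2.2-1} first, and then to deduce the total–variation bound \eqref{ee2.2} from it. For \eqref{ee2.2-1} I would apply the variational It\^o formula to $t\mapsto |u^n(t)|_H^2$. Using $\langle B(u^n,u^n),u^n\rangle=0$ from \eqref{1.2}, the identity $\langle Au^n,u^n\rangle=\|u^n\|^2$, and the monotonicity \eqref{eqn-monotonicity} (so that the penalization contribution $2n\int_0^t(u^n,u^n-\pi(u^n))\,ds\ge 0$ stays on the left), taking expectations (the stochastic integral having zero mean thanks to \eqref{E:H_estimate} and \eqref{eqn-2.2}) gives
\begin{align*}
\mathbb{E}|u^n(t)|_H^2+2\,\mathbb{E}\int_0^t\|u^n(s)\|^2\,ds\le |u_0^n|_H^2+2\,\mathbb{E}\int_0^t|\langle f(u^n(s)),u^n(s)\rangle|\,ds+\mathbb{E}\int_0^t|\sigma(u^n(s))|_H^2\,ds.
\end{align*}
The bound $|\langle f(u^n),u^n\rangle|\le C(1+|u^n|_H)\|u^n\|$ and Young's inequality turn the $f$-term into $\|u^n\|^2+C(1+|u^n|_H^2)$, half of which is absorbed by the dissipation; together with the linear growth of $\sigma$ and $|u_0^n|_H\le 1$ this yields $\mathbb{E}\int_0^T\|u^n\|^2\,ds\le 1+CT\bigl(1+\mathbb{E}[\sup_{s\le T}|u^n(s)|_H^2]\bigr)$, which \eqref{eqn-2.2} bounds uniformly in $n$.

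For \eqref{ee2.2} write $K^n(t):=n\int_0^t(u^n(s)-\pi(u^n(s)))\,ds$ for the penalization term, so that the quantity to be controlled is exactly $V^n:=n\int_0^T|u^n(s)-\pi(u^n(s))|_H\,ds=\Var_H(K^n)([0,T])$. The naive estimate — bounding the $L^1$-in-time norm by the $L^2$-in-time norm and invoking \eqref{eqn-2.3'} — fails, since the factor $n$ survives ($n^2\cdot K_1/n=nK_1$). The idea is instead to pair the increment $dK^n$ against the \emph{unit} outward field $\pi(u^n)=u^n/|u^n|$: by \eqref{e02}, on $\{|u^n|>1\}$ one has $(\pi(u^n),dK^n)=n|u^n-\pi(u^n)|_H\,ds$, while $dK^n=0$ on $\{|u^n|\le 1\}$, so $\int_0^T(\pi(u^n),dK^n)=V^n$. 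To realise $\pi(u^n)$ as a gradient I would fix a radial function $g(x)=\rho(|x|_H)$ with $\rho\in C^2[0,\infty)$, $\rho(r)=\tfrac12 r^2$ near $0$ and $\rho'(r)=1$ for $r\ge 1$, $\rho'$ nondecreasing from $0$ to $1$; then $g\in C^2(H)$ has bounded derivatives, $\nabla g(x)=\rho'(|x|)x/|x|$ equals $\pi(x)$ on $\{|x|>1\}$, $\nabla g(x)\in V$ whenever $x\in V$, and $\rho'(r)/r$ stays bounded.

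Applying the It\^o formula of \cite{Pardoux79} to $g(u^n(t))$ and isolating the pairing $\int_0^T(\nabla g(u^n),dK^n)=V^n$ gives, after writing $\langle\nabla g(u^n),Au^n\rangle=\frac{\rho'(|u^n|)}{|u^n|}\|u^n\|^2$,
\begin{align*}
V^n&=g(u_0^n)-g(u^n(T))-\int_0^T\frac{\rho'(|u^n(s)|)}{|u^n(s)|}\|u^n(s)\|^2\,ds\\
&\quad+\int_0^T\langle f(u^n(s)),\nabla g(u^n(s))\rangle\,ds+\int_0^T\langle B(u^n(s),u^n(s)),\nabla g(u^n(s))\rangle\,ds\\
&\quad+\int_0^T(\nabla g(u^n(s)),\sigma(u^n(s)))\,dW(s)+\tfrac12\int_0^T(D^2g(u^n(s))\sigma(u^n(s)),\sigma(u^n(s)))\,ds.
\end{align*}
Here $g(u_0^n)$ is a deterministic constant since $|u_0^n|_H\le 1$, the dissipative integral is $\le 0$ as $\rho'\ge 0$, and $\langle B(u^n,u^n),\nabla g(u^n)\rangle=\frac{\rho'(|u^n|)}{|u^n|}\bar b(u^n,u^n,u^n)=0$ by \eqref{1.2}. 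Dropping the two nonpositive terms $-g(u^n(T))$ and $-\int_0^T\frac{\rho'(|u^n|)}{|u^n|}\|u^n\|^2\,ds$ (legitimate because $V^n\ge 0$) leaves $V^n$ bounded above by $g(u_0^n)$ plus the $f$-term, the martingale, and the It\^o correction. The factor $|u^n|^{-1}$ is decisive for the $f$-term: $|\langle f(u^n),\nabla g(u^n)\rangle|\le\frac{\rho'(|u^n|)}{|u^n|}C(1+|u^n|)\|u^n\|\le C'\|u^n\|$, so by Cauchy--Schwarz and \eqref{ee2.2-1}, $\mathbb{E}[(\int_0^T\langle f,\nabla g\rangle\,ds)^2]\le C'^2T\,M_2$. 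The martingale is controlled by the It\^o isometry together with $|\nabla g|\le 1$ and the linear growth of $\sigma$, and the correction term by $\|D^2g\|\le c_0$; both are then bounded through $\sup_n\mathbb{E}[\sup_{t\le T}|u^n(t)|_H^2]<\infty$ from \eqref{eqn-2.2}. Since the right-hand side is nonnegative a.s., $\mathbb{E}[(V^n)^2]$ is dominated by the squared sum of these $L^2(\Omega)$-norms, uniformly in $n$, which is \eqref{ee2.2}.

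The main obstacle is precisely \eqref{ee2.2}: the gain of the factor $n$ comes from testing $dK^n$ against the unit field $\pi(u^n)$ rather than against $u^n-\pi(u^n)$ (the latter only reproduces the energy-type estimate \eqref{eqn-2.3'}), and this forces the auxiliary potential $g$ whose gradient coincides with $\pi$ outside $\bar D$ while $g$ remains $C^2$ across the origin, so that the It\^o correction $\tfrac12(D^2g(u^n)\sigma,\sigma)$ stays integrable; the bare choice $g(x)=|x|_H$ would produce a $1/|u^n|$ singularity at $0$. The remaining care is to justify the variational It\^o formula for $g\circ u^n$, which is legitimate since $g\in C^2(H)$ has bounded derivatives and $s\mapsto\nabla g(u^n(s))$ lies in $L^2([0,T];V)$.
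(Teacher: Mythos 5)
Your proof is correct, and it rests on the same geometric fact as the paper's proof, namely part (ii) of Lemma \ref{lem-2.1}: pairing the penalization direction $u^n-\pi(u^n)$ against the \emph{unit} outward field $\pi(u^n)$ yields $|u^n-\pi(u^n)|_H$ to the first power, so no factor of $n$ is lost (your diagnosis of why the naive route through \eqref{eqn-2.3'} fails is exactly right). But the implementation is genuinely different. The paper runs a single It\^o computation on $|u^n(t)|_H^2$ and splits the penalization pairing as $(u^n,u^n-\pi(u^n))=|u^n-\pi(u^n)|_H^2+(\pi(u^n),u^n-\pi(u^n))=|u^n-\pi(u^n)|_H^2+|u^n-\pi(u^n)|_H$ (this is \eqref{ee2.5}); both \eqref{ee2.2} and \eqref{ee2.2-1} then fall out of the same energy identity after Young's inequality on the $f$-term, the Burkholder inequality, and the fourth-moment bound \eqref{eqn-2.2} of Lemma \ref{lem-2.2}. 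You instead prove \eqref{ee2.2-1} first by the standard energy estimate, and then apply It\^o's formula a second time, to the flattened radial potential $g(x)=\rho(|x|_H)$ whose gradient coincides with $\pi$ outside $\bar{D}$, which isolates the total-variation integrand directly. Your route buys bounded $\nabla g$ and $D^2g$, so the stochastic integral is handled by the plain It\^o isometry (and, since you evaluate at $t=T$ only, no Burkholder inequality is needed); its cost is the need to justify Pardoux's It\^o formula for a non-standard functional --- which you address, and which is legitimate since $g\in C^2(H)$ has bounded derivatives and $\nabla g$ maps $V$ into $V$; note the paper itself applies the same formula to the comparable radial functional $d(\cdot,\bar{D})^4$ in Lemma \ref{lem-2.4} --- plus the two-step structure, since your $f$-term estimate consumes \eqref{ee2.2-1}. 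One small imprecision: when you square the It\^o-correction term $\tfrac12\int_0^T(D^2g(u^n)\sigma(u^n),\sigma(u^n))\,ds$ you need fourth moments of $|u^n|_H$, not second; this is harmless because \eqref{eqn-2.2}, which you cite, is precisely that fourth-moment bound.
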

\vskip 0.3cm
\noindent \begin{proof}[Proof of Lemma \ref{lem-2.3}] Let us choose and fix $T>0$.  By the It\^o  formula, see  \cite{Pardoux79},  we have
\begin{equation}\label{ee2.4}\begin{aligned}
|u^n(t)|_H^2&=|u(0)|_H^2-2\int_0^t\lb u^n(s), Au^n(s)\rb \,ds\nonumber\\
  &+2\int_0^t\lb u^n(s), \sigma(u^n(s))\rb dW(s)+2\int_0^t \lb u^n(s), f(u^n(s))\rb \,ds\nonumber\\
  &-2n\int_0^t\lb u^n(s), u^n(s)-\pi(u^n(s))\rb \,ds+\int_0^t |\sigma(u^n(s))|_H^2\,ds.
  \end{aligned}\end{equation}
  Note that $\lb u^n(s), Au^n(s)\rb = \Vert u^n(s) \Vert^2$, $s \in [0,T]$ and
  \begin{equation}|\sigma(u^n(s))|_H\leq C(1+|u^n(s)|_H),\;\; s \in [0,T].\end{equation}
By the Young  inequality and inequality (\ref{f-estimate}) we have
\begin{equation}\label{f-estimate-2}
\begin{aligned}
2\int_0^t \lb u^n(s), f(u^n(s))\rb \,ds
&\leq \int_0^t \Vert u^n(s)\Vert^2ds +C\int_0^t (1+|u^n(s)|_H)^2 \,ds\\
&=\int_0^t \lb u^n(s), Au^n(s)\rb \,ds +C\int_0^t (1+|u^n(s)|_H)^2 \,ds.
\nonumber
\end{aligned}\end{equation}
In view of property (ii) in Lemma \ref{lem-2.1},
\begin{align}\label{ee2.5}
&\hspace{-1truecm}2n\int_0^t\lb u^n(s), u^n(s)-\pi(u^n(s))\rb \,ds\nonumber\\
&=2n\int_0^t|u^n(s)-\pi(u^n(s))|_H^2\,ds+2n\int_0^t\lb \pi(u^n(s)), u^n(s)-\pi(u^n(s))\rb \,ds\nonumber\\
&= 2n\int_0^t|u^n(s)-\pi(u^n(s))|_H^2\,ds+ 2n\int_0^t|u^n(s)-\pi(u^n(s))|_H\,ds,\;\; t \in [0,T].
  \end{align}
By Lemma \ref{lem-2.2}\dela{2.2}, inequalities  (\ref{ee2.4}), (\ref{f-estimate-2}),  (\ref{ee2.5}) and the Burkholder inequality we arrive at
\begin{equation}\label{ee2.6}
\sup_n\mathbb{E}\,[\bigg (n\int_0^T|u^n(s)-\pi(u^n(s))|_{H}\,ds\bigg )^2\,]
\leq  C+C\sup_n\mathbb{E}\,[\sup_{t \in [0,T]}|u^n(t)|_H^4]\leq M_1(T)
\end{equation}
and
\begin{equation}\label{ee2.6-1}
\sup_n\mathbb{E}\,[\int_0^T \Vert u^n(s) \Vert ^2\,ds]\leq M_2(T).
\end{equation}
Hence the proof of Lemma \ref{lem-2.3} is complete.
\end{proof}

In the proof of the next Lemma \ref{lem-2.4}  we will use two auxiliary functions. Let us list them for the convenience of the reader in a separate item.

\begin{lemma}\label{lem-auxiliary}
Let us define two functions $G,g:H \to [0,\infty)$ by, for $y\in H$,
\begin{align*}
G(y)&=d(y,\bar{D})^4,\;\;\ g(y)=d(y,\bar{D})^2.
\end{align*} Then  for all $y,v,h\in H$, the following identities hold
\begin{align}\label{eqn-g-G}
\nabla G(y)&=4g(y)(y-\pi (y))\nonumber\\
    g(y)\vert y-\pi(y)\vert^2 &=  G(y), \;\; y \in  H,\\
    g(y) &\leq  |y-\pi(y)|_H^2, \;\; y \in  H,
    \label{eqn-g-G-2}\\
    \label{2.9}
G^{\prime}(y)(v)&=4g(y)\lb y-\pi(y), v\rb ,
\\
\label{2.10}
   G^{\prime\prime}(y)(h,v)&=8\lb y-\pi(y), h\rb \lb y-\pi(y), v\rb \\
  &+4g(y)1_{|y|_H>1}\left[ \lb h,v\rb (1-\frac{1}{|y|_H})+\frac{1}{|y|_H^3}\lb y,h\rb \lb y,v\rb \right].
  \nonumber
\end{align}
  \end{lemma}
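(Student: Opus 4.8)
The plan is to reduce the whole lemma to two facts already available: the pointwise identity $\vert y-\pi(y)\vert_H=\dist(y,\bar{D})$ and the gradient formula $\nabla\phi(y)=y-\pi(y)$ from Remark \ref{rem-projection pi}, where $\phi=\tfrac12 g$. First I would record the elementary identity that drives all the algebraic relations. Since $y-\pi(y)=\lambda(\vert y\vert_H)\,y$ with $\lambda(r)=(1-\tfrac1r)$ for $r>1$ and $\lambda(r)=0$ otherwise, one gets $\vert y-\pi(y)\vert_H=(\vert y\vert_H-1)_+=\dist(y,\bar{D})$, hence $\vert y-\pi(y)\vert_H^2=g(y)$. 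This immediately yields the second line of \eqref{eqn-g-G} (because $G=g^2$, so $g\,\vert y-\pi(y)\vert^2=g^2=G$) and \eqref{eqn-g-G-2} (with equality, in fact).

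For the first-order formulas I would write $g=2\phi$, so that $\nabla g(y)=2\bigl(y-\pi(y)\bigr)$ by Remark \ref{rem-projection pi}, and differentiate $G=g^2$ via the chain rule: $\nabla G=2g\,\nabla g=4g(y)\bigl(y-\pi(y)\bigr)$, which is the first line of \eqref{eqn-g-G}, and whose directional form is precisely \eqref{2.9}. For the Hessian I would differentiate $v\mapsto G'(y)(v)=4g(y)\,\langle y-\pi(y),v\rangle$ in a direction $h$ by the product rule. Differentiating the scalar factor $g$ contributes $4\bigl(\nabla g(y)\cdot h\bigr)\langle y-\pi(y),v\rangle=8\langle y-\pi(y),h\rangle\langle y-\pi(y),v\rangle$, the first term of \eqref{2.10}. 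Differentiating the vector factor $y-\pi(y)$ contributes $4g(y)\,\langle D_h(y-\pi(y)),v\rangle$; on $\{\vert y\vert_H>1\}$ one computes $D_h\vert y\vert_H=\langle y,h\rangle/\vert y\vert_H$, hence $D_h\bigl(y/\vert y\vert_H\bigr)=h/\vert y\vert_H-\langle y,h\rangle\,y/\vert y\vert_H^3$, giving $D_h(y-\pi(y))=(1-\tfrac1{\vert y\vert_H})h+\vert y\vert_H^{-3}\langle y,h\rangle\,y$, while on $\{\vert y\vert_H<1\}$ the field $y-\pi(y)$ vanishes identically and so does $g$. Pairing with $v$ reproduces the bracketed expression weighted by $4g(y)\,1_{\vert y\vert_H>1}$ in \eqref{2.10}.

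The main obstacle is regularity across the sphere $\mathbb{S}=\{\vert y\vert_H=1\}$, where $\pi$ fails to be $C^1$, so that the two piecewise computations must be shown to glue into a bona fide second derivative. I would dispose of this by observing that $G(y)=(\vert y\vert_H-1)_+^4$ is in fact $C^2$ (indeed $C^3$) on all of $H$, since $G$ together with its first two radial derivatives tends to $0$ as $\vert y\vert_H\downarrow 1$; consequently $G''$ is continuous on $H$. It then remains only to check continuity of the claimed formula at $\mathbb{S}$: there both $y-\pi(y)$ and $g(y)$ vanish, so the entire right-hand side of \eqref{2.10} equals $0$, which matches the value $0$ obtained from the interior region, and the computation is thereby valid on all of $H$. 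With these regularity remarks in place the identities \eqref{eqn-g-G}--\eqref{2.10} follow from the direct differentiations above.
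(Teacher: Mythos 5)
Your proposal is correct and follows essentially the same route as the paper, whose entire proof consists of the remark that the lemma "follows by simple calculations" from the piecewise formula $g(y)=\vert y-\pi(y)\vert^2=\bigl((\vert y\vert_H-1)_+\bigr)^2$; you simply carry out those calculations (chain and product rule on the two regions $\vert y\vert_H<1$ and $\vert y\vert_H>1$, using $\nabla\phi=I-\pi$ from the preceding remark). Your additional care in gluing the two piecewise computations into a genuine second derivative across the sphere $\{\vert y\vert_H=1\}$ --- noting that $G(y)=\bigl((\vert y\vert_H-1)_+\bigr)^4$ is of class $C^2$ there and that both sides of the Hessian formula vanish on the sphere --- addresses a point the paper passes over in silence, and it is handled correctly.
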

\begin{proof}This Lemma
follows by  simple calculations by using the following formula:
\begin{equation}\label{eqn-g}
g(y)=\vert y -\pi(y)\vert^2=\left \{\begin{array}{ll} 0,& \mbox{if $y\in H, \; |y|< 1$,}\\
 \vert y -\frac{y}{\vert y \vert} \vert^2=\bigl( \vert y\vert -1\bigr)^2, & \mbox{if $y\in H, \; |y| \geq 1$.}\end{array}\right.
\end{equation}

\end{proof}

\begin{lemma}\label{lem-2.4}
In the above framework,  the following holds for every $T>0$
\begin{equation}\label{2.8}
\lim_{n\rightarrow \infty}\mathbb{E}\,[\sup_{t \in [0,T]}|u^n(t)-\pi(u^n(t))|_H^4\,]=0.
\end{equation}
\end{lemma}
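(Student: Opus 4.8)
The plan is to apply the It\^o formula to the functional $G$ of Lemma \ref{lem-auxiliary} along the solution $u^n$ of the penalized problem \eqref{eqn-2.1} and to exploit the strong dissipation produced by the penalization. Since $G(u^n(t))=|u^n(t)-\pi(u^n(t))|_H^4$ (see Lemma \ref{lem-auxiliary}), the quantity to be estimated is exactly $\mathbb{E}\bigl[\sup_{t\le T}G(u^n(t))\bigr]$. Recalling that $\nabla G(y)=4g(y)(y-\pi(y))$ and writing $\lambda=\lambda(|u^n(s)|)\ge 0$ so that $u^n-\pi(u^n)=\lambda u^n$, the It\^o formula of \cite{Pardoux79} (valid here because $G$ is a $C^2$ radial functional, just as $|\cdot|_H^4$ in Lemma \ref{lem-2.2}) together with \eqref{2.9}--\eqref{2.10} yields, for every $t$,
\begin{align*}
G(u^n(t)) &+ 4\int_0^t g(u^n)\lb u^n-\pi(u^n),Au^n\rb\,ds + 4n\int_0^t G(u^n)\,ds \\
&= G(u^n(0)) + 4\int_0^t g(u^n)\lb u^n-\pi(u^n),f(u^n)\rb\,ds + N^n(t) + \frac12\int_0^t G''(u^n)(\sigma(u^n),\sigma(u^n))\,ds,
\end{align*}
where $N^n(t):=4\int_0^t g(u^n)\lb u^n-\pi(u^n),\sigma(u^n)\rb\,dW$. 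On the left the penalty term $4n\int_0^t G(u^n)\,ds$ is nonnegative, and so is the $A$-term, since $\lb u^n-\pi(u^n),Au^n\rb=\lambda\Vert u^n\Vert^2\ge 0$. The nonlinearity disappears: as $u^n-\pi(u^n)$ is parallel to $u^n$, the skew-symmetry \eqref{1.2} gives $\lb B(u^n,u^n),u^n-\pi(u^n)\rb=\lambda\,\bar b(u^n,u^n,u^n)=0$. Finally $G(u^n(0))=0$ because $u_0^n\in\bar D$.

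The core of the proof is to show that the surviving right-hand terms are $O(1/n)$, after the two nonnegative left-hand terms have been spent to absorb the dangerous contributions. The principal obstacle is the $f$-term: since $f$ takes values in $V^\ast$, the duality pairing forces the $V$-norm $\Vert u^n\Vert$ to appear, and this norm is \emph{not} controlled by any bound in Lemmas \ref{lem-2.2}--\ref{lem-2.3}. The remedy is that precisely this norm is available on the left, in the $A$-term. Using the growth bound \eqref{f-estimate} and Young's inequality I would estimate
\[
4\,g(u^n)\bigl|\lb u^n-\pi(u^n),f(u^n)\rb\bigr|\le 2\,g(u^n)\lambda\Vert u^n\Vert^2+C\,g(u^n)\lambda(1+|u^n|_H)^2,
\]
absorb the first summand into $4\,g(u^n)\lambda\Vert u^n\Vert^2$, and then, using $(1+|u^n|_H)^2\le 4|u^n|_H^2$ on $\{|u^n|_H>1\}$ together with $g(u^n)\lambda|u^n|_H^2\le\frac12 G(u^n)+\frac12|u^n|_H^2|u^n-\pi(u^n)|_H^2$, dominate the remainder by a multiple of $\int_0^tG(u^n)\,ds$ (absorbed into $4n\int_0^tG(u^n)\,ds$ once $n$ is large) plus a multiple of $\int_0^t|u^n|_H^2|u^n-\pi(u^n)|_H^2\,ds$. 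The It\^o correction is handled in the same spirit: using $|\sigma(u^n)|_H\le C(1+|u^n|_H)$ and again $(1+|u^n|_H)^2\le 4|u^n|_H^2$ on $\{|u^n|_H>1\}$, every term in \eqref{2.10} is dominated by $C|u^n|_H^2|u^n-\pi(u^n)|_H^2$. By the \textit{a priori} estimate \eqref{eqn-2.3'} one has $\mathbb{E}\int_0^T|u^n|_H^2|u^n-\pi(u^n)|_H^2\,ds\le K_1/n$.

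After these two simultaneous absorptions one is left, for $n$ large and every $t\in[0,T]$, with the pathwise inequality $G(u^n(t))\le C\int_0^t|u^n|_H^2|u^n-\pi(u^n)|_H^2\,ds+N^n(t)$. Taking the supremum over $[0,t]$ and then expectations, it remains to control the martingale term. For this I would invoke the Burkholder--Davis--Gundy inequality and the identity $g(u^n)^2=G(u^n)$ (Lemma \ref{lem-auxiliary}), which bound the quadratic variation by $\langle N^n\rangle_t\le C\,\sup_{s\le t}G(u^n(s))\int_0^t|u^n|_H^2|u^n-\pi(u^n)|_H^2\,ds$; one more Young inequality then gives $\mathbb{E}\bigl[\sup_{s\le t}|N^n(s)|\bigr]\le\frac12\mathbb{E}\bigl[\sup_{s\le t}G(u^n(s))\bigr]+CK_1/n$. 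The first term is absorbed into the left-hand side, which is legitimate because $\mathbb{E}\bigl[\sup_{t\le T}G(u^n(t))\bigr]\le\mathbb{E}\bigl[\sup_{t\le T}|u^n(t)|_H^4\bigr]<\infty$ by \eqref{eqn-2.2}. The outcome is $\mathbb{E}\bigl[\sup_{t\le T}G(u^n(t))\bigr]\le C/n$, which tends to $0$ and proves \eqref{2.8}. I expect the careful bookkeeping of the two simultaneous absorptions (into the $A$-term and into the penalty term) and the \textit{a priori} finiteness needed to absorb the martingale contribution to be the only genuinely delicate points.
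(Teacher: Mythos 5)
Your proof is correct and follows essentially the same route as the paper's own argument: It\^o's formula applied to $G$, skew-symmetry \eqref{1.2} killing the $B$-term, Young's inequality absorbing the $f$-term into the nonnegative $A$-term, the second-order (It\^o correction) terms dominated by $|u^n|_H^2|u^n-\pi(u^n)|_H^2$, the Burkholder--Davis--Gundy inequality plus Young with absorption of $\frac12\mathbb{E}[\sup G]$ for the martingale, and the \textit{a priori} estimate \eqref{eqn-2.3'} to conclude. The only cosmetic difference is that you additionally absorb a remainder $C\int_0^t G(u^n)\,ds$ into the penalty term for large $n$, whereas the paper bounds that remainder directly by $(1+|u^n|_H^2)|u^n-\pi(u^n)|_H^2$ and invokes \eqref{eqn-2.3'}; your explicit verification that $\mathbb{E}[\sup_{t\le T}G(u^n(t))]<\infty$ (needed to justify the absorption) is a point the paper leaves implicit.
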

\vskip 0.3cm
\begin{proof}[Proof of Lemma \ref{lem-2.4}]   The proof of this Lemma is somehow similar to a proof of the Burkholder inequality for stochastic evolution equations, see for instance  \cite{BP-max},

Let us choose and fix $T>0$.  Let $G$ and $g$ be the functions from Lemma \ref{lem-auxiliary}.
Applying  the It\^o  formula, see \cite{Pardoux79}, since $G(u^n(0))= 0$ we have
\begin{align}\label{2.11}
G(u^n(t))&=-4\int_0^t g(u^n(s))\lb u^n(s)-\pi(u^n(s)), Au^n(s)\rb \, ds \nonumber\\
  &+4 \int_0^tg(u^n(s))\lb u^n(s)-\pi(u^n(s)), \sigma(u^n(s))\rb dW(s)\nonumber\\
&+4\int_0^tg(u^n(s))\lb u^n(s)-\pi(u^n(s)), f(u^n(s))\rb \,ds \nonumber\\
&+4\int_0^tg(u^n(s))\lb u^n(s)-\pi(u^n(s)), B(u^n(s),u^n(s))\rb \,ds \nonumber\\
  &-4n \int_0^tg(u^n(s))|u^n(s)-\pi(u^n(s))|_H^2\,ds\nonumber\\
&+4 \int_0^t \lb u^n(s)-\pi(u^n(s)),\sigma(u^n(s))\rb ^2\,ds\nonumber\\
  &+2\int_0^tg(u^n(s))1_{|u^n(s)|>1}\Bigl[|\sigma(u^n(s))|_H^2 (1-\frac{1}{|u^n(s)|_H})\nonumber \\
  & \hspace{5truecm}+\frac{1}{|u^n(s)|_H^3}\lb u^n(s),\sigma(u^n(s))\rb ^2\Bigr]\, ds   \nonumber\\
  &:= I^n_1(t)+I^n_2(t)+I^n_3(t)+I^n_4(t)+I^n_5(t)+I^n_6(t)+I^n_7(t),\;\;t\in [0,T].
  \end{align}
  We now look at each of the seven terms separately. Clearly $I_5^n(t)\leq  0$, for $t\in [0,T]$. For the term $I_1^n$ we have, for $t\in [0,T]$,
  \begin{equation}\label{2.12}
  \begin{aligned}
-\frac{1}{4}I_1^n(t)&=\int_0^t g(u^n(s))\lb u^n(s)-\pi(u^n(s)), Au^n(s)\rb\, ds \\
&= &\int_0^t g(u^n(s))\lambda(|u^n(s)|_H)\lb u^n(s), Au^n(s)\rb \,ds
\nonumber
\\
&=\int_0^t g(u^n(s))\lambda(|u^n(s)|_H)\Vert  u^n(s) \Vert^2\, ds
\geq 0.
\nonumber  \end{aligned}\end{equation}
For the term $I^n_4$, by assumption (\ref{1.2}) we have, for $t\in [0,T]$,
\begin{equation}\label{2.12-1}
\begin{aligned}
I_4^n(t)&=4\int_0^t g(u^n(s))\lb u^n(s)-\pi(u^n(s)), B(u^n(s),u^n(s))\rb \,ds \nonumber\\
&= &4\int_0^t g(u^n(s))\lambda(|u^n(s)|_H)\lb u^n(s), B(u^n(s),u^n(s))\rb \,ds= 0.
  \end{aligned}\end{equation}
  By the Burkholder inequality and inequality \eqref{eqn-g-G} we infer that for $t\in [0,T]$,
\begin{equation}\label{2.13}
\begin{aligned}
&\hspace{-1.5truecm}\mathbb{E}\,[\sup_{0\leq s\leq t}|I_2^n(s)|]
\leq C \mathbb{E}\,[(\int_0^tg(u^n(s))^2\lb u^n(s)-\pi(u^n(s)), \sigma(u^n(s))\rb ^2\,ds)^{\frac{1}{2}}]\\
&\leq C \mathbb{E}\,[(\int_0^tg(u^n(s)) \vert  u^n(s)-\pi(u^n(s))\vert^2  g(u^n(s)) \vert \sigma(u^n(s))\vert^2\,ds)^{\frac{1}{2}}]\nonumber\\
&\leq C\mathbb{E}\,[\sup_{0\leq s\leq t}(g(u^n(s)))^{\frac{1}{2}}(\int_0^t  \lb u^n(s)-\pi(u^n(s)),  \sigma(u^n(s))\rb ^2g(u^n(s)) \vert \sigma(u^n(s))\vert^2 \,ds)^{\frac{1}{2}}]\nonumber\\
&\leq \frac{1}{2}\mathbb{E}\,[\sup_{0\leq s\leq t}g(u^n(s))] +  C_{1}\mathbb{E}\,[\int_0^t\lb u^n(s)-\pi(u^n(s)), \sigma(u^n(s))\rb ^2g(u^n(s)) \vert \sigma(u^n(s))\vert^2\,ds]\nonumber\\
&\leq \frac{1}{2}\mathbb{E}\,[\sup_{0\leq s\leq t}g(u^n(s))]+C_2\mathbb{E}\,[\int_0^t(1+ |u^n(s)|_H^2) |u^n(s)-\pi(u^n(s))|_H^2\,ds].
\nonumber
\end{aligned}\end{equation}
By the Young  inequality and inequality (\ref{f-estimate}) we infer that
\begin{equation}\label{f-estimate-3}
\begin{aligned}
I_3^n&=4\int_0^tg(u^n(s))\lambda(|u^n(s)|_H)\lb u^n(s), f(u^n(s))\rb \,ds\nonumber\\
&\leq 2\int_0^tg(u^n(s))\lambda(|u^n(s)|_H) \Vert u^n(s)\Vert^2ds +C\int_0^t g(u^n(s))\lambda(|u^n(s)|_H)(1+|u^n(s)|_H)^2 \,ds\nonumber\\
&\leq &2\int_0^t g(u^n(s))\lb u^n(s)-\pi(u^n(s)), Au^n(s)\rb \,ds +C\int_0^tg(u^n(s)) (1+|u^n(s)|_H)^2 \,ds.\nonumber\\
&
\end{aligned}\end{equation}
Let us now observe that  by  \eqref{eqn-g-G-2}   we deduce  that
\begin{equation}\label{2.13-1}
I_3^n(t)
\leq 2\int_0^t g(u^n(s))\lb u^n(s)-\pi(u^n(s)), Au^n(s)\rb \,ds+ C\int_0^t \, |u^n(s)-\pi(u^n(s))|_H^2(1+|u^n(s)|_H^2)ds.
\end{equation}

Using the linear growth of  $\sigma$ we similarly have
\begin{equation}\label{2.13-2}
\begin{aligned}
&I_6^n(t)\leq  \int_0^t|u^n(s)-\pi(u^n(s))|_H^2(1+|u^n(s)|_H^2) ds,
\end{aligned}\end{equation}
and
\begin{equation}\label{2.13-3}
I_7^n(t)\leq  \int_0^t|u^n(s)-\pi(u^n(s))|_H^2(1+|u^n(s)|_H^2) ds.
\end{equation}
Hence, from some of the above estimates we infer that, for $t\in [0,T]$,

\begin{align}
\sum_{j\not=2}I^n_j(t) & \leq C\int_0^t \, |u^n(s)-\pi(u^n(s))|_H^2(1+|u^n(s)|_H^2)ds.
  \end{align}
Combining the above inequality with inequality   \eqref{2.13} we deduce that ,  for $t\in [0,T]$,
\begin{align*}
\mathbb{E}\,[\sup_{0\leq s\leq t}G(u^n(s))]&\leq \frac{1}{2}\mathbb{E}\,[\sup_{0\leq s\leq t}G(u^n(s))]+C_3\mathbb{E}\,[\int_0^t(1+ |u^n(s)|_H^2)  |u^n(s)-\pi(u^n(s))|_H^2\,ds].
  \end{align*}
Thus, by exploiting the constant $\frac12$  on the RHS above we infer that
\begin{align*}
\mathbb{E}\,[\sup_{0\leq s\leq t}G(u^n(s))]&\leq 2C_3\mathbb{E}\,[\int_0^t(1+ |u^n(s)|_H^2) |u^n(s)-\pi(u^n(s))|_H^2\,ds].
  \end{align*}

Thus, by  \eqref{eqn-2.3'}, we infer that  as $n\rightarrow \infty$,
\begin{equation}\label{2.14'}
\lim_{n \to \infty} \mathbb{E}\,[\sup_{t \in [0,T]}|u^n(t)-\pi(u^n(t))|_{H}^4\,]=\lim_{n \to \infty}\mathbb{E}\,[\sup_{t \in [0,T]}G(u^n(t))]=0.
\end{equation}
This concludes the proof of Lemma \ref{lem-2.4}.
    \end{proof}

\vskip 0.3cm

\section{The existence and the uniqueness of solutions to the reflected problem}\label{sec-main}

The aim of this section is to formulate  and prove the following main result of the paper.

\begin{theorem}\label{thm-main}
Let us assume that  assumptions \textbf{(A.1)}-\textbf{(A.3)} are satisfied. The reflected stochastic evolution equation (\ref{original equation})  admits a unique solution $(u, L)$ that satisfies, for $T> 0$,
\begin{equation}\label{2.15}
\mathbb{E}\,\bigl[\sup_{t \in [0,T]}|u(t)|_H^2\, +\, \int_0^T \Vert u(t) \Vert^2\,dt\,\bigr]< \infty.
\end{equation}
\end{theorem}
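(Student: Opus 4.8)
The plan is to construct $(u,L)$ as the limit of the penalized solutions $u^n$ together with the reflection terms $L^n(t):=-n\int_0^t\bigl(u^n(s)-\pi(u^n(s))\bigr)\,ds$, and then to check the conditions of Definition \ref{def-solution} one by one. Since $\pi(u^n)$ and $a^n:=u^n-\pi(u^n)$ point in the same direction, the total variation of $L^n$ on $[0,T]$ equals $n\int_0^T|a^n(s)|_H\,ds$, so Lemma \ref{lem-2.3} already gives $\sup_n\mathbb{E}\bigl[|\Var_{H}(L^n)([0,T])|^2\bigr]\le M_1$; this is the quantitative content of the bounded-variation requirement \eqref{eqn-variation finite}, provided the passage to the limit is justified.

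The core of the existence proof is to show that $(u^n)$ is Cauchy in $\mathbb{M}^2(0,T)$. Putting $z:=u^n-u^m$, I would apply the It\^o formula to $|z(t)|_H^2$. The operator $A$ produces $-2\int_0^t\|z\|^2\,ds$ by coercivity; the $f$ and $\sigma$ contributions are controlled by (A.1); and the bilinear term is handled by the two-dimensional Navier--Stokes estimate: writing $B(u^n,u^n)-B(u^m,u^m)=B(z,u^n)+B(u^m,z)$, identity \eqref{1.2} kills $\langle B(u^m,z),z\rangle$, and \eqref{1.1} with Young's inequality yields $|\langle B(z,u^n),z\rangle|\le\tfrac12\|z\|^2+C|z|_H^2\,\|u^n\|^2|u^n|_H^2$, the first term being absorbed by coercivity. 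The decisive reflection contribution is $-2\int_0^t\langle z,\,n a^n-m a^m\rangle\,ds$: decomposing $z=(u^n-\pi(u^m))-(u^m-\pi(u^m))$ (respectively $z=(u^n-\pi(u^n))-(u^m-\pi(u^n))$) and invoking \eqref{e03} with $y=\pi(u^m)\in\bar D$ (respectively $y=\pi(u^n)$) gives $\langle z,\,n a^n-m a^m\rangle\ge-(n+m)\langle a^n,a^m\rangle$, so this contribution is at most $2(n+m)\int_0^T|a^n|_H|a^m|_H\,ds$. By Cauchy--Schwarz, bounding $n\int_0^T|a^n|_H\,ds$ in $L^2$ via Lemma \ref{lem-2.3} and forcing $\sup_t|a^m(t)|_H\to0$ via Lemma \ref{lem-2.4}, the expectation of this cross term tends to $0$ as $n,m\to\infty$.

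I expect the main obstacle to be that the Gronwall weight $\|u^n\|^2|u^n|_H^2$ is random and not manifestly uniformly integrable in $n$, so a naive Gronwall argument is unavailable. I would resolve this by localization: for $R>0$ set $\tau_R:=\inf\{t:\int_0^t(\|u^n\|^2|u^n|_H^2+\|u^m\|^2|u^m|_H^2)\,ds\ge R\}\wedge T$; on $[0,\tau_R]$ the weight is bounded by $R$, so Gronwall gives $\mathbb{E}[\sup_{[0,\tau_R]}|z|_H^2+\int_0^{\tau_R}\|z\|^2\,ds]\le e^{CR}\delta_{nm}$ with $\delta_{nm}\to0$, while Lemmas \ref{lem-2.2}--\ref{lem-2.3} (together with a second-moment bound on $\int_0^T\|u^n\|^2\,ds$ read off from the energy identity \eqref{ee2.4}) yield $\sup_n\mathbb{E}[\int_0^T\|u^n\|^2|u^n|_H^2\,ds]<\infty$, whence $\mathbb{P}(\tau_R<T)\le C/R$ uniformly in $n,m$. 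Splitting $\mathbb{E}[\sup_{[0,T]}|z|_H^2]$ over $\{\tau_R=T\}$ and its complement, and using the uniform $L^4$-bound of Lemma \ref{lem-2.2} on the latter, I let $n,m\to\infty$ and then $R\to\infty$ to conclude that $(u^n)$ is Cauchy. Denoting the limit by $u$, Lemma \ref{lem-2.4} forces $\pi(u^n)\to u$ as well, so $u$ is $\bar D$-valued, continuous, and in $L^2([0,T];V)$ (conditions (i)--(ii)); subtracting all the now-convergent terms in \eqref{eqn-2.1} shows $L^n\to L$ for some $L$, lower semicontinuity of the variation under the bound of Lemma \ref{lem-2.3} gives \eqref{eqn-variation finite}, and the strong $L^2([0,T];V)$ convergence lets the bilinear term pass, yielding the identity \eqref{eqn-SEE with reflection}.

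It then remains to verify the variational inequality (iv) and uniqueness. For (iv), estimate \eqref{e03} with $x=u^n(s)$ and $y=\phi(s)\in\bar D$ gives $\langle\phi(s)-u^n(s),a^n(s)\rangle\le0$, so $\int_0^T(\phi(t)-u^n(t),L^n(dt))=-n\int_0^T\langle\phi-u^n,a^n\rangle\,ds\ge0$; passing to a subsequence along which $u^n\to u$ uniformly in $H$ and $L^n\to L$ pointwise with uniformly bounded variation, Helly's theorem lets me pass this inequality to the limit. For uniqueness, given two solutions $(u_1,L_1)$ and $(u_2,L_2)$, I apply It\^o to $|u_1-u_2|_H^2$; using (iv) with the admissible test functions $\phi=u_2$ and $\phi=u_1$ shows $\int_0^t\langle u_1-u_2,d(L_1-L_2)\rangle\le0$, i.e.\ the reflection is dissipative, and the same Navier--Stokes estimate together with (A.1) leaves a pathwise Gronwall inequality with the \emph{a.s.\ finite} weight $\|u_1\|^2|u_1|_H^2+\|u_2\|^2|u_2|_H^2$. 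Here no uniform integrability is needed: the exponentially weighted process $|u_1(t)-u_2(t)|_H^2\exp(-C\int_0^t(1+\|u_1\|^2|u_1|_H^2+\|u_2\|^2|u_2|_H^2)\,ds)$ is a nonnegative local supermartingale starting at $0$, hence vanishes, forcing $u_1=u_2$ and then $L_1=L_2$. Finally, the estimate \eqref{2.15} is inherited by $u$ from the uniform bounds of Lemmas \ref{lem-2.2}--\ref{lem-2.3} by Fatou's lemma.
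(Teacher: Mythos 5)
Your proposal is correct in substance and shares the paper's overall architecture --- the penalized solutions $u^n$ of \eqref{eqn-2.1}, the uniform estimates of Lemmas \ref{lem-2.2}--\ref{lem-2.3}, the decay Lemma \ref{lem-2.4}, the cross-term estimate for the reflection terms via \eqref{e03}, the construction of $L$ as the limit of $L^n$ together with lower semicontinuity of the $H$-variation on $C([0,T],V^\ast)$, and a weighted It\^o argument for uniqueness --- but it takes a genuinely different route at the central technical step, namely how the random Gronwall weight produced by the bilinear term is tamed. The paper inserts the Schmalfuss exponential weight $f_n(t)=\exp(-\lambda\int_0^t\Vert u^n(s)\Vert^2\,ds)$ directly into the It\^o computation, obtains the weighted convergence \eqref{2.20-1}, and then trades the weight for convergence \emph{in probability} by conditioning on the event $\{\int_0^T\Vert u^n\Vert^2\,ds\le M\}$ (Lemma \ref{lem-aux-1}); it explicitly refrains from claiming convergence in expectation. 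You instead localize with the stopping times $\tau_R$ and run a stochastic-Gronwall argument, then remove the localization via Markov's inequality and the uniform fourth-moment bound \eqref{eqn-2.2}. Carried out carefully this works and even yields the stronger conclusion that $(u^n)$ is Cauchy in $\mathbb{M}^2(0,T)$ --- consistent with the paper, since \eqref{eqn-2.2} makes $\sup_t|u^n-u^m|_H^2$ uniformly integrable --- but two points in your sketch need repair: on $[0,\tau_R]$ it is the \emph{time integral} of the weight, not the weight itself, that is bounded by $R$, so the relevant tool is the pathwise Gronwall lemma with integrable coefficient; and since the resulting factor $e^{CR}$ sits outside the expectation, the Burkholder--Davis--Gundy/Young step must be done with a Young parameter depending on $R$, followed by a second, deterministic Gronwall iteration. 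Your supplementary bound $\sup_n\mathbb{E}\int_0^T\Vert u^n\Vert^2|u^n|_H^2\,ds<\infty$ is genuinely needed and is obtainable exactly as you indicate, by rerunning the proof of Lemma \ref{lem-2.2} while retaining the coercive term. One further caveat: for condition (iv) you appeal to ``Helly's theorem'', but since $L^n\to L$ only in $C([0,T],V^\ast)$ while the integrand $\phi-u$ is merely $H$-valued, no off-the-shelf vector-valued Helly theorem applies; the paper closes precisely this gap with a density argument (approximating $\phi-u$ by $C([0,T],V)$-valued functions, estimate \eqref{2.28-4}), and you would need to reproduce that step. Finally, your nonnegative-local-supermartingale phrasing of the uniqueness argument is a clean equivalent of the paper's Gronwall-in-expectation with the weight $h$, with the small advantage of requiring no integrability of the random weight.
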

\vskip 0.3cm

\begin{remark}
\label{rem-bv} Let us recall, see \cite[\S 17 and Theorem III.2.1, p. 358]{Dinculeanu_1967}, that if a function $L:[0,\infty)\to H$ is of locally bounded variation,  then there exists a unique function  $\mu: \bigcup _{t>0}\mathcal{B}([0,t]) \to H$ such  that $\mu((s,t])=L(t+)-L(s+)$.
This measure is denoted by $dL(t)$.

\end{remark}

\begin{proof}[Proof of Theorem \ref{thm-main}]
Recall that  $u_0\in \bar{D}$.

We will show that the sequence $\{ u^n, n\geq 1\}$ defined in (\ref{eqn-2.1}) converges to a solution to equation (\ref{original equation}).

Let us observe that without a loss of generality we can fix $T> 0$ for the remainder of the proof.

For $\lambda> 0$ and $n\in\mathbb{N}$ let us define a process $f_n$ by the following formula.
\begin{equation}f_n(t)=\exp(-\lambda \int_0^t \Vert u^n(s) \Vert ^2\,ds),\;\; t\geq 0.\end{equation}

Our proof here has some common features with the proof of the uniqueness to the 2D stochastic Navier-Stokes Equations, see \cite{Schmalfuss_1997} and/or \cite{Brz+Mot_2013}.

\textbf{Step 1} We will show the following auxiliary result.
\begin{lemma}\label{lem-aux-0}
There exists an adapted process $u$ with trajectories in the space $ C([0,T], H)\cap L^2([0, T], V)$ such that
  \begin{equation}\label{2.21*}
  \lim_{n \rightarrow \infty}\{\sup_{s\in [0,T]}|u^n(s)-u(s)|_H^2+ \int_0^T  \Vert u^n(s)-u(s) \Vert ^2\,ds\}=0 \mbox{ in probability } \mathbb{P}.
  \end{equation}
\end{lemma}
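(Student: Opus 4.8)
The plan is to show that the sequence $(u^n)_{n\ge1}$ is Cauchy in probability in the Banach space $X_T$, and then to invoke the completeness of $X_T$ to produce the limit $u$. Fix $n,m\in\mathbb{N}$, set $w:=u^n-u^m$, and write down the equation satisfied by $w$ in the Gelfand triple \eqref{eqn-Gelfand triple}: its drift contains $-Aw$, the increments $f(u^n)-f(u^m)$ and $B(u^n,u^n)-B(u^m,u^m)$, the diffusion part $\sigma(u^n)-\sigma(u^m)$, and the penalisation difference $-n(u^n-\pi(u^n))+m(u^m-\pi(u^m))$. Following the device used for the uniqueness of $2$D stochastic Navier--Stokes equations (see \cite{Schmalfuss_1997} and \cite{Brz+Mot_2013}), I would apply the It\^o formula from \cite{Pardoux79} to the weighted functional $f_n(t)f_m(t)\,|w(t)|_H^2$, where $f_n(t)f_m(t)=\exp\bigl(-\lambda\int_0^t(\Vert u^n(s)\Vert^2+\Vert u^m(s)\Vert^2)\,ds\bigr)$, with the constant $\lambda$ to be fixed large at the end.

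The bilinear term is precisely the reason for the exponential weight. Since $\bar b(u^m,w,w)=0$ and $\bar b(w,w,w)=0$ by \eqref{1.2}, expanding $B(u^n,u^n)-B(u^m,u^m)$ leaves only $\lb B(w,u^m),w\rb=\bar b(w,u^m,w)$, which by \eqref{1.1} is bounded by $2\Vert w\Vert\,|w|_H\,\Vert u^m\Vert$; after Young's inequality this produces a term of the form $C|w|_H^2(\Vert u^n\Vert^2+\Vert u^m\Vert^2)$ that is exactly compensated by the contribution $|w|_H^2\,d(f_nf_m)=-\lambda(\Vert u^n\Vert^2+\Vert u^m\Vert^2)f_nf_m|w|_H^2\,dt$ once $\lambda\ge C$. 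The terms involving $f$ and $\sigma$ are controlled by the global Lipschitz assumption (A.1), the dissipative term $-\lb w,Aw\rb=-\Vert w\Vert^2$ supplies the coercivity needed to absorb the $\Vert w\Vert^2$ contributions, and the stochastic integral is dealt with by the Burkholder inequality, absorbing a $\tfrac12$ of the supremum into the left-hand side. The genuinely new and hardest term is the penalisation difference $-2n\lb w,u^n-\pi(u^n)\rb+2m\lb w,u^m-\pi(u^m)\rb$: here I would use property (iii) of Lemma \ref{lem-2.1} (with $y=\pi(u^m)\in\bar D$ in the first bracket and $y=\pi(u^n)\in\bar D$ in the second) to discard the genuinely dissipative contributions and bound what remains by $2(n+m)\,|u^n-\pi(u^n)|_H\,|u^m-\pi(u^m)|_H$. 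Integrating in time and estimating $|u^m-\pi(u^m)|_H$ by its supremum, this is dominated by $2(a_nb_m+a_mb_n)$ with $a_n:=n\int_0^T|u^n-\pi(u^n)|_H\,ds$ and $b_n:=\sup_{[0,T]}|u^n-\pi(u^n)|_H$.

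At this point the a priori estimates of the previous section close the argument. By \eqref{ee2.2} of Lemma \ref{lem-2.3} one has $\sup_n\mathbb{E}[a_n^2]\le M_1$, while \eqref{2.8} of Lemma \ref{lem-2.4} gives $\mathbb{E}[b_n^4]\to0$; hence by the Cauchy--Schwarz inequality $\mathbb{E}[a_nb_m+a_mb_n]\to0$ as $n,m\to\infty$. Combining all the estimates and applying Gronwall's lemma, I expect to obtain
\[
\lim_{n,m\to\infty}\mathbb{E}\Bigl[\sup_{t\in[0,T]}f_n(t)f_m(t)\,|w(t)|_H^2+\int_0^T f_n(s)f_m(s)\,\Vert w(s)\Vert^2\,ds\Bigr]=0.
\]

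It then remains to remove the weights, and this is where the bound $\sup_n\mathbb{E}\int_0^T\Vert u^n\Vert^2\,ds\le M_2$ from \eqref{ee2.2-1} is used. Since $t\mapsto f_n(t)f_m(t)$ is non-increasing and strictly positive, on the event $\{\int_0^T(\Vert u^n\Vert^2+\Vert u^m\Vert^2)\,ds\le R\}$ one has $f_nf_m\ge e^{-\lambda R}$, so the unweighted expression is controlled by $e^{\lambda R}$ times the weighted one; Chebyshev's inequality together with the uniform bound $M_2$ makes the probability of the complementary event small, uniformly in $n,m$, by taking $R$ large, and Markov's inequality applied to the weighted $L^1(\Omega)$-bound then yields $\sup_{[0,T]}|u^n-u^m|_H^2+\int_0^T\Vert u^n-u^m\Vert^2\,ds\to0$ in probability. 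Thus $(u^n)$ is Cauchy in probability in the complete space $X_T$ and converges in probability to some $X_T$-valued limit $u$; passing to an a.s.\ convergent subsequence shows that $u$ is $\mathbb{F}$-adapted with trajectories in $C([0,T],H)\cap L^2([0,T],V)$, which is exactly \eqref{2.21*}. The main obstacle is the penalisation cross term in the second paragraph: its treatment relies on the monotonicity of $\pi-I$ encoded in Lemma \ref{lem-2.1}(iii) and, crucially, on the quantitative smallness estimates \eqref{ee2.2} and \eqref{2.8}, without which the factors $n$ and $m$ could not be tamed.
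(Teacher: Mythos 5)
Your proposal is correct and follows essentially the same route as the paper's proof: It\^o's formula applied to an exponentially weighted squared $H$-norm of $w=u^n-u^m$, the antisymmetry \eqref{1.2} plus the exponential weight to absorb the bilinear term, Lemma \ref{lem-2.1}(iii) combined with the estimates \eqref{ee2.2} and \eqref{2.8} to control the penalisation cross terms via $a_nb_m+a_mb_n$, Gronwall, and finally removal of the weight by a Chebyshev/Markov splitting on $\{\int_0^T\Vert u^n\Vert^2\,ds\le M\}$ using \eqref{ee2.2-1}, followed by completeness of $C([0,T],H)\cap L^2([0,T],V)$. The only immaterial difference is that the paper arranges the algebra so the surviving bilinear term is $\bar b(w,u^n,w)$, allowing the weight $f_n$ alone, whereas you keep $\bar b(w,u^m,w)$ and use the symmetric weight $f_nf_m$.
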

\begin{proof}[Proof of Lemma \ref{lem-aux-0}]
Let us choose and fix natural numbers $m\geq n$.

Applying  the It\^o  formula from \cite{Pardoux79} we infer that
\begin{equation}\label{2.16}
\begin{aligned}
&f_n(t)|u^n(t)-u^m(t)|_H^2\nonumber\\
&=-\lambda \int_0^tf_n(s) \Vert u^n(s) \Vert ^2|u^n(t)-u^m(t)|_H^2\,ds\nonumber\\
&-2\int_0^tf_n(s)(u^n(s)-u^m(s),  A(u^n(s)-u^m(s)))\,ds\nonumber\\
  &+2 \int_0^tf_n(s)\lb u^n(s)-u^m(s), \sigma(u^n(s))-\sigma(u^m(s))\rb dW(s)\nonumber\\
  &+2\int_0^tf_n(s) \lb u^n(s)-u^m(s), f(u^n(s))-f(u^m(s))\rb \,ds\nonumber\\
 &+2\int_0^tf_n(s) \lb u^n(s)-u^m(s), B(u^n(s),u^n(s))-B(u^m(s),u^m(s))\rb \,ds\nonumber\\
  &-2n\int_0^tf_n(s)\lb u^n(s)-u^m(s), u^n(s)-\pi(u^n(s))\rb \,ds\nonumber\\
&+ 2m\int_0^tf_n(s)\lb u^n(s)-u^m(s),u^m(s)-\pi(u^m(s))\rb \,ds\nonumber\\
  &+ \int_0^t f_n(s)|\sigma(u^n(s))-\sigma(u^m(s))|_H^2\,ds\nonumber\\
  &:=&I^{n,m}_1(t)+I^{n,m}_2(t)+I^{n,m}_3(t)+I^{n,m}_4(t)+I^{n,m}_5(t)+I^{n,m}_6(t)+I^{n,m}_7(t)+I^{n,m}_8(t).
  \end{aligned}\end{equation}
Observe that
\begin{equation}\label{2.16-1}
   I^{n,m}_2(t)=-2\int_0^tf_n(s) \Vert u^n(s)-u^m(s) \Vert ^2\,ds.
   \end{equation}
By the assumption on $f$ and the Young  inequality, we have
\begin{equation}\label{f-estimate-4}
\begin{aligned}
I^{n,m}_4(t)&\leq & C \int_0^tf_n(s) \Vert u^n(s)-u^m(s) \Vert \Vert f(u^n(s))-f(u^m(s))\Vert_{V^*} ds\nonumber\\
&\leq  C \int_0^tf_n(s) \Vert u^n(s)-u^m(s) \Vert \vert u^n(s)-u^m(s)\vert_H ds\nonumber\\
&\leq \frac{1}{2} \int_0^tf_n(s) \Vert u^n(s)-u^m(s) \Vert^2 \,ds +C \int_0^tf_n(s) \vert u^n(s)-u^m(s)\vert_H^2 \,ds.
\end{aligned}\end{equation}

Now,
\begin{equation}\label{2.16-2}
   I^{n,m}_5(t)=2\int_0^tf_n(s)\left( \bar{b}(u^n(s),u^n(s),u^n(s)-u^m(s))- \bar{b}(u^m(s),u^m(s),u^n(s)-u^m(s))\right)
   \end{equation}
By assumption (\ref{1.2}), we have
\begin{align}\label{eqn-test-01}
\bar{b}(u^m(s),u^m(s),u^n(s)-u^m(s))&=\bar{b}(u^m(s),u^n(s),u^n(s)-u^m(s))
\\&\hspace{-3truecm}-\bar{b}(u^m(s),u^n(s)-u^m(s),u^n(s)-u^m(s))=\bar{b}(u^m(s),u^n(s),u^n(s)-u^m(s)).
\nonumber
\end{align}
Therefore, in view of assumption (\ref{1.2-b}) we infer  that
\begin{equation}\label{2.16-3}
\begin{aligned}
   &\hspace{-2truecm}\left| \bar{b}(u^n(s),u^n(s),u^n(s)-u^m(s))- \bar{b}(u^m(s),u^m(s),u^n(s)-u^m(s))\right|\nonumber\\
&=\left| \bar{b}(u^n(s)-u^m(s),u^n(s),u^n(s)-u^m(s))\right|\nonumber\\
&\leq2  \Vert u^n(s) \Vert  |u^n(s)-u^m(s)|_H  \Vert u^n(s)-u^m(s) \Vert .
   \end{aligned}\end{equation}
It follows from (\ref{2.16-2}), (\ref{2.16-3}) that

\begin{equation}\label{2.16-4}
\begin{aligned}
   I^{n,m}_5(t)&\leq &4\int_0^tf_n(s) \Vert u^n(s) \Vert  |u^n(s)-u^m(s)|_H  \Vert u^n(s)-u^m(s) \Vert \, ds\\
&\leq &\int_0^tf_n(s) \Vert u^n(s)-u^m(s) \Vert ^2\,ds+ 4\int_0^tf_n(s) \Vert u^n(s) \Vert ^2 |u^n(s)-u^m(s)|_H^2\,ds,
   \nonumber \end{aligned}\end{equation}
where the inequality $4ab\leq a^2+4b^2$ has been used.

  As $\pi(u^n(s))\in \bar{D}$ and $ \pi(u^m(s))\in \bar{D}$, it follows from \eqref{e03} in part  (iii) of Lemma \ref{lem-2.1} that $\lb u^n(s)-\pi(u^m(s)), u^n(s)-\pi(u^n(s))\rb \geq 0$ and
   $\lb u^m(s)-\pi(u^n(s)), u^m(s)-\pi(u^m(s))\rb \geq 0$. Hence,
   \begin{equation}\label{2.17}
   \begin{aligned}
   I^{n,m}_6(t)&=-2n\int_0^tf_n(s)\lb u^n(s)-\pi(u^m(s)), (u^n(s)-\pi(u^n(s))\rb \,ds\nonumber\\
   &+2n\int_0^tf_n(s)\lb u^m(s)-\pi(u^m(s)), (u^n(s)-\pi(u^n(s)))\rb \,ds\nonumber\\
   &\leq2n\int_0^tf_n(s)\lb u^m(s)-\pi(u^m(s)), (u^n(s)-\pi(u^n(s)))\rb \,ds\nonumber\\
   &\leq 2n (\int_0^t|u^n(s)-\pi (u^n(s))|_H\,ds) \sup_{0\leq s\leq t}|u^m(s)-\pi(u^m(s))|_{H},
   \end{aligned}\end{equation}
as $f_n(s)\leq 1$.
   A similar calculation also yields
   \begin{equation}\label{2.18}
   I^{n,m}_7(t)\leq (2m \int_0^t|u^m(s)-\pi (u^m(s))|_H\,ds) \sup_{0\leq s\leq t}|u^n(s)-\pi(u^n(s))|_H
   \end{equation}
Substituting (\ref{2.16-1}), \eqref{f-estimate-3},  (\ref{2.16-2}), (\ref{2.18}), (\ref{2.17}) into (\ref{2.16}), choosing $\lambda > 4$, as in the proof of Lemma \ref{lem-2.4}\dela{2.4}, using the Burkholder and the  H\"older inequalities, as well as the Lipschitz continuity of the maps  $\sigma$,  we obtain that
\begin{equation}
\label{2.19}
\begin{aligned}
&\mathbb{E}\,[\sup_{0\leq s\leq t}f_n(s)|u^n(s)-u^m(s)|_H^2\,]+ \mathbb{E}\,[\int_0^tf_n(s)  \Vert u^n(s)-u^m(s) \Vert ^2\,ds]\\
  &\leq &\frac{1}{2}\mathbb{E}\,[\sup_{0\leq s\leq t}f_n(s)|u^n(s)-u^m(s)|_H^2\,]+C\mathbb{E}\,[\int_0^tf_n(s)|u^n(s)-u^m(s)|_H^2\,ds]\nonumber\\
  &+ C(\mathbb{E}\,[\left ( 2n \int_0^t|u^n(s)-\pi (u^n(s))|_{H}ds\right )^2\,])^{\frac{1}{2}}(\mathbb{E}\,[ \sup_{0\leq s\leq t}|u^m(s)-\pi(u^m(s))|_{H}^2\,])^{\frac{1}{2}}  \nonumber\\
  &+C(\mathbb{E}\,[\left ( 2m \int_0^t|u^m(s)-\pi (u^m(s))|_{H}ds\right )^2\,])^{\frac{1}{2}}(\mathbb{E}\,[ \sup_{0\leq s\leq t}|u^n(s)-\pi(u^n(s))|_{H}^2\,])^{\frac{1}{2}}.
  \nonumber
  \end{aligned}\end{equation}
  Remark that the first term on the right side of (\ref{2.19}) comes from the estimate of $ I^{n,m}_3$ and the Young  inequality.
  By the Gronwall Lemma and also Lemma \ref{lem-2.3} \dela{2.3}, we get
  \begin{equation}\label{2.20}\begin{aligned}
&\mathbb{E}\,[\sup_{s\in [0,T]}f_n(s)|u^n(s)-u^m(s)|_H^2\,]+\mathbb{E}\,[\int_0^tf_n(s)  \Vert u^n(s)-u^m(s) \Vert ^2\,ds]\nonumber\\
  &\leq & C(M_T)^{\frac{1}{2}}(\mathbb{E}\,[ \sup_{s\in [0,T]}|u^m(s)-\pi(u^m(s))|_H^2\,])^{\frac{1}{2}}  \nonumber\\
  &+C(M_T)^{\frac{1}{2}}(\mathbb{E}\,[ \sup_{s\in [0,T]}|u^n(s)-\pi(u^n(s))|_H^2\,])^{\frac{1}{2}}.
  \end{aligned}\end{equation}
  By Lemma \ref{lem-2.4}\dela{2.4}, the above yields  that
  \begin{equation}\label{2.20-1}
\lim_{n,m\rightarrow \infty}\{\mathbb{E}\,[\sup_{s\in [0,T]}f_n(s)|u^n(s)-u^m(s)|_H^2\,]+ \mathbb{E}\,[\int_0^Tf_n(s)  \Vert u^n(s)-u^m(s) \Vert ^2\,ds]\}=0.
\end{equation}
Next we will show the following auxiliary result in which we use a classical concept,    see e.g. Definition 1.2.6 in \cite{Geis}.
\begin{lemma}\label{lem-aux-1}
The sequence $\{u^n: n\geq 1\}$ of $C([0,T], H)\cap L^2([0, T], V)$-valued random variables is   Cauchy   in probability.
\end{lemma}
\begin{proof}[Proof of Lemma \ref{lem-aux-1}]
Indeed, given $\delta> 0$, for any $M> 0$ we have
 \begin{equation}\label{2.20-2}\begin{aligned}
&\mathbb{P}(\sup_{s\in [0,T]}|u^n(s)-u^m(s)|_H^2+\int_0^T \Vert u^n(s)-u^m(s) \Vert ^2\,ds\geq \delta)\nonumber\\
&\leq \mathbb{P}(\sup_{s\in [0,T]}|u^n(s)-u^m(s)|_H^2+\int_0^T \Vert u^n(s)-u^m(s) \Vert ^2\,ds\geq \delta, \int_0^T \Vert u^n(s) \Vert ^2\,ds\leq M)\nonumber\\
&\quad\quad\quad +\mathbb{P}(\int_0^T \Vert u^n(s) \Vert ^2\,ds> M)\nonumber\\
&\leq \mathbb{P}(\sup_{s\in [0,T]}f_n(s)|u^n(s)-u^m(s)|_H^2+\int_0^Tf_n(s) \Vert u^n(s)-u^m(s) \Vert ^2\,ds\geq  e^{-\lambda TM}\delta)
\\
&+\frac{1}{M}\mathbb{E}\,[\int_0^T \Vert u^n(s) \Vert ^2\,ds]\nonumber\\
&\leq e^{\lambda TM}\frac{1}{\delta}\left\{E[\sup_{s\in [0,T]}f_n(s)|u^n(s)-u^m(s)|_H^2\,]
+\mathbb{E}\,[\int_0^tf_n(s)  \Vert u^n(s)-u^m(s) \Vert ^2\,ds]\right\}\nonumber\\
&+\frac{1}{M}\mathbb{E}\,[\int_0^T \Vert u^n(s) \Vert ^2\,ds].
\end{aligned}\end{equation}
Now, for any $\varepsilon> 0$, by Lemma \ref{lem-2.3}\dela{2.3} we can  choose $M> 0$ such that
\begin{equation}\label{2.20-3}
   \frac{1}{M}\mathbb{E}\,[\int_0^T \Vert u^n(s) \Vert ^2\,ds]\leq \varepsilon, \quad\quad\quad \mbox{for all}\quad \quad n\geq 1.
   \end{equation}
Then by letting  $n,m\rightarrow \infty$ in (\ref{2.20-2}) we obtain
\begin{equation}\limsup_{n,m\rightarrow \infty}\mathbb{P}(\sup_{s\in [0,T]}|u^n(s)-u^m(s)|_H^2+\int_0^T \Vert u^n(s)-u^m(s) \Vert ^2\,ds\geq \delta)\leq \varepsilon.\end{equation}
As $\varepsilon$ is arbitrary,  we infer that
\begin{equation}
\lim_{n,m\rightarrow \infty}\mathbb{P}(\sup_{s\in [0,T]}|u^n(s)-u^m(s)|_H^2+\int_0^T \Vert u^n(s)-u^m(s) \Vert ^2\,ds\geq \delta)=0.\end{equation}
This completes the proof of Lemma \ref{lem-aux-1}.
\end{proof}
Since the space $C([0,T], H)\cap L^2([0, T], V)$ is complete, we infer that  there exists an adapted process $u$ with trajectories in the space $ C([0,T], H)\cap L^2([0, T], V)$ such that
  \begin{equation}\label{2.21}
  \lim_{n \rightarrow \infty}\bigl[ \sup_{s\in [0,T]}|u^n(s)-u(s)|_H^2+ \int_0^T  \Vert u^n(s)-u(s) \Vert ^2\,ds\bigr]=0 \mbox{ in probability}.
  \end{equation}
Hence the proof of Lemma  \ref{lem-aux-0} is complete.
\end{proof}

\textbf{Step 2.}   Applying    the Fatou Lemma, see \cite[I.28]{Rudin_1987_RCA}, in view Lemma \ref{lem-2.3}\dela{2.3} we infer
 that the process  $u$ satisfies inequality (\ref{2.15}).  Let us stress that we do not claim that the convergence of $u_n$ to $u$ is in the space $L^2(\Omega,C([0,T], H)\cap L^2([0, T], V))$.

\textbf{Step 3.}
  Our final task is to  show that the process  $u(t), t\geq 0$ is a solution to equation (\ref{original equation}).  To make this precise we need to construct an appropriate process $L$.

  For this aim let us consider a sequence $(L^{n})_{n\in \mathbb{N}}$ of   stochastic processes defined by the following formulae
  \begin{equation}\label{2.22}
  L^{n}(t)=-n\int_0^t(u^n(s)-\pi (u^n(s)))\,ds, \;\; t \geq 0.
  \end{equation}
Let us observe that according to inequality \eqref{ee2.2} in Lemma \ref{lem-2.3}\dela{2.3},
  \begin{equation}\label{eqn-2.23}
  \sup_n\mathbb{E}\,[\Var_{H}(L^{n})([0, T])^2\,]\leq \sup_n\mathbb{E}\bigl[(n\int_0^T|u^n(t)-\pi (u^n(t))|_{H}\,dt)^2\bigr]< \infty.
  \end{equation}
  \vskip 0.2cm
  Next we will show  the following auxiliary result.

\begin{lemma}\label{lem-aux-2}
The sequence $(L^n)_{n=1}^\infty$ of $C([0, T], V^{\ast})$-valued random variables   is convergent (in  $ C([0, T], V^{\ast})$)   in probability.
  Moreover,  if a process $L$ is the limit of $(L^n)$, then
it  is a $H$-valued adapted process of bounded variation and
\begin{equation*}
  \mathbb{E}\,[\Var_{H}({L})([0,T])^2\,] < \infty.
\end{equation*}
\end{lemma}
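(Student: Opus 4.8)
The plan is to recover $L^n$ from the penalised equation \eqref{eqn-2.1} and to pass to the limit term by term. Solving \eqref{eqn-2.1} for the penalisation term gives, for every $t\in[0,T]$,
\begin{equation*}
L^n(t)=u^n(t)-u_0^n+\int_0^t Au^n(s)\,ds-\int_0^t\sigma(u^n(s))\,dW(s)-\int_0^t f(u^n(s))\,ds-\int_0^t B(u^n(s),u^n(s))\,ds .
\end{equation*}
First I would show that each of the six terms on the right converges in $C([0,T],V^\ast)$ in probability, using the convergence of $u^n$ from Lemma \ref{lem-aux-0}, and then read off the limit
\begin{equation*}
L(t)=u(t)-u_0+\int_0^t Au(s)\,ds-\int_0^t\sigma(u(s))\,dW(s)-\int_0^t f(u(s))\,ds-\int_0^t B(u(s),u(s))\,ds .
\end{equation*}

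For the linear terms this is routine: since $H\embed V^\ast$ continuously, $u^n\to u$ (and $u_0^n\to u_0$) in $H$ forces convergence in $V^\ast$; boundedness of $A:V\to V^\ast$ together with the $L^2([0,T],V)$-convergence from Lemma \ref{lem-aux-0} handles $\int_0^\cdot Au^n$; the Lipschitz bound \eqref{locally lipschitz} handles $\int_0^\cdot f(u^n)$; and the stochastic term is dealt with by the Lipschitz property of $\sigma$ together with the standard fact that $\int_0^T|u^n(s)-u(s)|_H^2\,ds\to 0$ in probability implies $\int_0^\cdot\sigma(u^n)\,dW\to\int_0^\cdot\sigma(u)\,dW$ in $C([0,T],H)$ in probability. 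The only genuinely nonlinear term is $\int_0^\cdot B(u^n,u^n)$; here I would split $B(u^n,u^n)-B(u,u)=B(u^n-u,u^n)+B(u,u^n-u)$ and estimate each piece in $V^\ast$ by \eqref{1.1} and the Poincaré inequality \eqref{eqn-Poincare} (which gives $|w|_H\le\lambda_1^{-1/2}\Vert w\Vert$), obtaining
\begin{equation*}
\int_0^T\Vert B(u^n-u,u^n)\Vert_{V^\ast}\,ds\le C\,\bigl(\sup_{s\le T}|u^n(s)-u(s)|_H\bigr)^{1/2}\Bigl(\int_0^T\Vert u^n-u\Vert\,ds\Bigr)^{1/2}\Bigl(\int_0^T\Vert u^n\Vert^2\,ds\Bigr)^{1/2},
\end{equation*}
and an analogous bound for $B(u,u^n-u)$. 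Both right-hand sides tend to $0$ in probability by Lemma \ref{lem-aux-0} and the uniform bound on $\int_0^T\Vert u^n\Vert^2\,ds$ from Lemma \ref{lem-2.3}.

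The main obstacle is the last assertion: the convergence $L^n\to L$ holds only in the weak space $V^\ast$, whereas the variation bound \eqref{eqn-2.23} is in the strong space $H$, so it cannot simply be inherited by the limit. My plan is to transfer it by a lower semicontinuity argument based on the duality characterisation of the $H$-variation. For any partition $0=t_0<\dots<t_N=T$ and test vectors $w_1,\dots,w_N\in V$ with $|w_i|_H\le 1$ one has $\sum_i\lb L^n(t_i)-L^n(t_{i-1}),w_i\rb=\sum_i(L^n(t_i)-L^n(t_{i-1}),w_i)_H\le\Var_{H}(L^n)([0,T])$, where $\lb\cdot,\cdot\rb$ is the $V^\ast$--$V$ pairing; passing to a subsequence along which $L^n\to L$ almost surely in $C([0,T],V^\ast)$ and then taking the supremum over partitions and over such $w_i$ (legitimate because $V$ is dense in $H$, so this supremum equals $\Var_{H}(L)([0,T])$ and is $+\infty$ unless every increment of $L$ lies in $H$) yields the pathwise bound $\Var_{H}(L)([0,T])\le\liminf_n\Var_{H}(L^n)([0,T])$. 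Finally, Fatou's lemma together with \eqref{eqn-2.23} gives $\mathbb{E}[\Var_{H}(L)([0,T])^2]\le\liminf_n\mathbb{E}[\Var_{H}(L^n)([0,T])^2]<\infty$; in particular $L$ has almost surely finite $H$-variation, all its increments lie in $H$, and since $L(0)=0$ it is a genuine $H$-valued bounded-variation process, its adaptedness passing to the limit along the almost surely convergent subsequence.
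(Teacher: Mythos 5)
Your proposal is correct and follows essentially the same route as the paper: recover $L^n$ from the penalised equation, define $L$ by the limiting formula and pass to the limit term by term in $C([0,T],V^\ast)$ (where the paper cites Temam's Lemma III.3.4 for the bilinear term and Friedman's theorem for the stochastic convolution, you give the equivalent direct estimates via \eqref{1.1} and the standard It\^o-isometry argument), and then transfer the variation bound to $L$ by lower semicontinuity of $\Var_{H}(\cdot)([0,T])$ on $C([0,T],V^\ast)$ combined with Fatou's lemma and \eqref{eqn-2.23}. Your duality characterisation of the $H$-variation (supremum over partitions and test vectors $w_i\in V$ with $|w_i|_H\le 1$, infinite unless the increments lie in $H$) is just an unpacking of the paper's lower semicontinuity of the map $|\cdot|_H:V^\ast\to[0,\infty]$, and your explicit passage to an almost surely convergent subsequence is a detail the paper's pathwise argument tacitly requires as well.
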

\vskip 0.3cm
\begin{proof}[Proof of Lemma \ref{lem-aux-2}]
 Let us recall from (\ref{eqn-2.1}) that the process $L^n$ is defined by
\begin{equation}\label{2.023}\begin{aligned}
{L}^n(t)&=u^n(t)+\int_0^tA u^n(s)\,ds-\int_0^t\sigma(u^n(s))\,dW(s)\nonumber\\
&-\int_0^tf(u^n(s))\,ds -\int_0^tB(u^n(s))\,ds-u(0),\;\;t\in [0,T].
\end{aligned}\end{equation}
Let us also define  a process
$L=(L(t), \,t\in [0,T])$,  by
\begin{align}\label{2.024}
{L}(t)=u(t)&+\int_0^tAu(s)\,ds-\int_0^t\sigma(u(s))\,dW(s)\nonumber\\
&-\int_0^tf(u(s))\,ds-\int_0^tB(u(s))\,ds-u(0),\;\;t\in [0,T].
\end{align}
Let us observe that ${L}^n(0)=0$ and  ${L}(0)=0$.

Since $u$ is an adapted process with trajectories in the space $C([0,T], H)\cap L^2([0, T], V)$, one can easily show, see for instance properties (i)-(iv) listed below, that
$L$ is a $V^\ast$-valued continuous and adapted process. \\
Moreover, keeping  in mind that
\begin{trivlist}
\item[(i)] the operator $A:V \to V^\ast$ is continuous,
\item[(ii)] $B$ induces a continuous bilinear map,
\begin{align*}
[C([0,T], H)\cap L^2([0, T], V)]^2 &\ni (u,v)\mapsto \{[0,T]\ni s \mapsto  B(u(s),v(s))\in V^\ast \} \\
& \in L^2([0, T], V^{\ast})
\end{align*}
see, for instance, Lemma III.3.4 in \cite{Temam_2001},
\item[(iii)] $\sigma: H \to H$ is globally Lipschitz continuous,
\item[(iv)] $f:H \to V^\ast$ is globally Lipschitz continuous,
 \end{trivlist}
 in view of  (\ref{2.21}) we infer that
as  $n\rightarrow \infty$,   the sequence ${L}^n$  converges to  ${L}$  in  $ C([0, T], V^{\ast})$ in probability.
In particular, we use  \cite[Theorem 4.3.4]{Friedman_1975_v1} to deduce from Lemma \ref{lem-aux-0} that

  \begin{equation}\label{2.21**}
  \lim_{n \rightarrow \infty}\{\sup_{t\in [0,T]}|\int_0^t\sigma(u^n(s))\,dW(s)-\int_0^t\sigma(u(s))\,dW(s)|_H^2\}=0 \mbox{ in probability}.
  \end{equation}

It is easy to see that the mapping $|\cdot |_H: V^\ast\rightarrow R^+$ (with the convention $|h|_H=\infty$ for $h\in V^\ast\setminus H$) is lower semi-continuous. Now we will provide an elementary proof for the fact that
the mapping of the total variation norm in $H$, i.e.
the function
  \begin{equation}\label{eqn-lsc of total varaintion}
\Var_{H}(\cdot)([0, T]): C([0, T], V^{\ast}) \ni v \mapsto  \Var_{H}(v)([0, T]) \in  [0,\infty]
  \end{equation}
is lower semi-continuous. Suppose  $v_n\rightarrow v$ in $C([0, T], V^{\ast})$ as $n\rightarrow \infty$. We will show
\begin{equation}\label{eqn-2.024-1}
\Var_{H}(v)([0,T])\leq \liminf_{n \to \infty} \Var_{H}(v_n)([0,T]).
\end{equation}
Without loss of generality we assume $\liminf_{n \to \infty} \Var_{H}(v_n)([0,T])<\infty$. Taking any finite partition $0=t_1<t_2<\cdots t_m=T$ of the interval $[0, T]$, we have
\begin{equation}\label{eqn-2.024-2}\begin{aligned}
\sum_{i=1}^{m-1}|v(t_{i+1})-v(t_{i})|_{H}
&\leq\sum_{i=1}^{m-1}\liminf_{n \to \infty}|v_n(t_{i+1})-v_n(t_{i})|_{H}\nonumber\\
&= \sum_{i=1}^{m-1}\lim_{n \to \infty}\inf_{k\geq n}|v_k(t_{i+1})-v_k(t_{i})|_{H}\nonumber\\
&= \lim_{n \to \infty}\sum_{i=1}^{m-1}\inf_{k\geq n}|v_k(t_{i+1})-v_k(t_{i})|_{H}\nonumber\\
&\leq \lim_{n \to \infty}\inf_{k\geq n}\big\{\sum_{i=1}^{m-1}|v_k(t_{i+1})-v_k(t_{i})|_{H}\big\}\nonumber\\
&\leq \lim_{n \to \infty}\inf_{k\geq n}\Var_{H}(v_k)([0,T])\nonumber\\
&=\liminf_{n \to \infty} \Var_{H}(v_n)([0,T]).
\end{aligned}\end{equation}
Now take the supermum of the left side over all the finite partitions of the interval $[0, T]$ to obtain (\ref{eqn-2.024-1}).
\vskip 0.3cm
By the lower semi-continuity of the total variation norm just proved,
it follows from  \eqref{2.22}  that ${L}$ is a $H$-valued process of bounded variation and $\mathbb{P}$-almost surely
\[\Var_{H}({L})([0,T])\leq \liminf_{n \to \infty} \Var_{H}({L}^n)([0,T]) \leq \liminf_{n \to \infty} \int_0^Tn|u^n(t)-\pi (u^n(t))|_{H}\,dt.\]

Hence,
by  the Fatou Lemma and inequality (\ref{eqn-2.23}), we infer that
\begin{equation}\label{eqn-2.025}
  \mathbb{E}\,[\Var_{H}({L})([0,T])^2\,]
\leq \sup_n\mathbb{E}\,[(\int_0^Tn|u^n(t)-\pi (u^n(t))|_{H}\,dt)^2\,]< \infty.
\end{equation}
Hence the proof of Lemma \ref{lem-aux-2} is complete.
\end{proof}

\vskip 0.3cm
 To complete the proof of Theorem \ref{sec-main} it is necessary to show that $(u, L)$ is a solution to equation \eqref{eqn-SEE with reflection}. To this aim  we need to verify the remaining conditions of  Definition \ref{def-solution}.
By  the Fatou Lemma and Lemma \ref{lem-2.4}\dela{2.4} we have
  \begin{equation}\label{2.26}
  \mathbb{E}\,[\sup_{s\in [0,T]}|u(s)-\pi(u(s))|_H^2\,]
 \leq \lim_{n \rightarrow \infty}\mathbb{E}\,[\sup_{s\in [0,T]}|u^n(s)-\pi(u^n(s))|_H^2\,]=0.
  \end{equation}
  This implies  that $\mathbb{P}$-almost surely, for every $t> 0$,  $u(t)=\pi(u(t))\in \bar{D}$.

Let us choose and fix function  a $\phi\in C([0,T],\bar{D})$.
Since by property \eqref{e03},  for every
  $n\in \mathbb{N}$,   $\lb u^n(t)-\phi(t), u^n(t)-\pi (u^n(t))\rb \geq 0$, we deduce by   \eqref{2.22} that almost surely
  \begin{equation}\label{eqn-2.27}
  \int_0^T(\phi(t)-u^n(t), L^{n}(dt))= -n \int_0^T (\phi(t)-u^n(t), u^n(s)-\pi (u^n(t)))\,dt  \geq 0,
  \end{equation}
  where the expression on the  LHS is the Riemann-Stieltjes integral  of the $H$-valued function $\phi(t)-u^n(t)$ with respect to an $H$-valued bounded-variation function $L^{n}$.
This will imply  that
\begin{equation}\label{2.28}
  \int_0^T(\phi(t)-u(t), L(dt))\geq 0
  \end{equation}
provided  we can show that in probability
\begin{equation}\label{2.28-1}
  \int_0^T(\phi(t)-u(t), L(dt))=\lim_{n\rightarrow \infty}\int_0^T(\phi(t)-u^n(t), L^{n}(dt)).
  \end{equation}
Let us observe that
\begin{align}\label{2.28-2}
  &\hspace{-0.2truecm}\int_0^T(\phi(t)-u^n(t), L^{n}(dt))-\int_0^T(\phi(t)-u(t), L(dt))\\
&=\int_0^T(u(t)-u^n(t), L^{n}(dt)) + \left(\int_0^T(\phi(t)-u(t), L^n(dt))-\int_0^T(\phi(t)-u(t), L(dt))\right)\nonumber\\
&=:C_1^n+C_2^n,\;\; n\in \mathbb{N}.\nonumber
  \end{align}
In view of (\ref{2.21}) and (\ref{eqn-2.23}) we infer that in probability
\begin{equation}\label{2.28-3}
 |C_1^n|\leq  \sup_{t \in [0,T]}|u^n(t)-u(t)|_H \Var_{H}(L^n)(T)
\to 0, \quad \quad \mbox{as} \quad n\rightarrow \infty.
  \end{equation}
As we only know that $L^n\rightarrow L$ in the space $C([0, T], V^{\ast})$, it is not evident that $ C_2^n\rightarrow 0$. However, we can prove this   by employing the classical density argument.
Let us put $v=\phi-u$. Since both $\phi$ and $u$ belong to $C([0,T],H)$, so does $v$.  By the density of the space $C([0,T],V)$ in the space $C([0,T],H)$, for
every  $\varepsilon> 0$, we  can choose $v_{\varepsilon}\in C([0,T],V) $ such that $\vert v_\varepsilon -v\vert_{C([0,T],H)}=\sup_{t \in [0,T]}|v_{\varepsilon}(t)-v(t)|_H^2< \varepsilon$. Then $C_2^n$ can be bounded as
\begin{align}
 |C_2^n| &= \vert \int_0^T(v(t), L^n(dt))-\int_0^T(v(t), L(dt)) \vert
\nonumber \\ &\leq    |\int_0^T(v(t)-v_\varepsilon(t)  , L^n(dt))| + |\int_0^T(v(t)-v_\varepsilon(t)  , L(dt))|\nonumber\\
&+ |\int_0^T(v_\varepsilon(t), L^n(dt))-\int_0^T(v_\varepsilon(t), L(dt))|.
\label{2.28-4}
  \end{align}
   In view of the uniform bounds (\ref{eqn-2.23}) and \eqref{eqn-2.025} on  the total variation of $L^n$ and $L$, the expectation of the first two terms on the RHS of \eqref{2.28-4}
   can be  bounded by $C\varepsilon^{\frac{1}{2}}$, while the expectation of the third term tends to zero as $n\rightarrow \infty$. As $\varepsilon$ is arbitrary, we conclude that
\begin{equation}\label{2.28-5}
\lim_{n\rightarrow \infty}C_2^n=0.
\end{equation}
Let us emphasize that the equality \eqref{2.28-5} can be seen as a version of the so called  Helly's Second Theorem, see  \cite[Theorem 1.6.10 p. 29]{Lojasiewicz_1988}.

Putting the above estimates together we infer equality (\ref{2.28-1}). Thus  we have shown that $(u, L)$ is a solution to equation (\ref{original equation}).
 \vskip 0.3cm
\begin{proof}[Proof of the  uniqueness part of Theorem \ref{thm-main}]
Our  proof is similar to the proof of Lemma   7.3  from \cite{Brz+Mot_2013} and uses the Schmalfuss idea of
application of the It\^o formula for appropriate function, see \cite{Schmalfuss_1997}.

Let  $(u, L)$ be the  solution to the reflected SEE (\ref{original equation}) constructed above.
 Let $(v, L_1)$ be another solution to the reflected SEE (\ref{original equation}).
 Set $h(t)=e^{-4\int_0^t \Vert u(s) \Vert ^2\,ds}$, $t\geq 0$.
 By the It\^o  formulae from \cite{Pardoux79} and  for real-valued processes it follows that
  \begin{equation}\label{e2.29}\begin{aligned}
  h(t)|u(t)-v(t)|_H^2 \
 &= -4\int_0^th(s) \Vert u(s) \Vert ^2|u(s)-v(s)|_H^2 \,ds\\
&-2\int_0^th(s) \Vert u(s)-v(s) \Vert ^2 \,ds\nonumber\\
 &+2\int_0^t h(s) (u(s)-v(s), \sigma(u(s))-\sigma(v(s)))\,dW(s)\nonumber\\
  &+2\int_0^th(s) (u(s)-v(s), f(u(s))-f(v(s)))\,ds\nonumber\\
&+2\int_0^th(s) \lb u(s)-v(s), B(u(s),u(s))-B(v(s),v(s))\rb \,ds\nonumber\\
 &+\int_0^th(s) |\sigma(u(s))-\sigma(v(s))|_H^2\,ds\nonumber\\
  &+2\int_0^th(s)(u(s)-v(s), L(ds))-2\int_0^th(s)(u(s)-v(s),L_1(ds)).
  \nonumber
 \end{aligned}\end{equation}

  As $v(t), u(t)\in \bar{D}$, for all $t\geq 0$, we infer from the definition of the solution that
 \begin{equation}\int_0^th(s)(u(s)-v(s), L(ds))\leq 0, \quad \int_0^th(s)(u(s)-v(s)), L_1(ds))\geq 0\end{equation}
 Hence, it follows that
\begin{equation}\label{2.34}
\begin{aligned}
h(t)|u(t)-v(t)|_H^2
 &\leq -4\int_0^th(s) \Vert u(s) \Vert ^2|u(s)-v(s)|_H^2 \,ds\nonumber\\
&-2\int_0^th(s) \Vert u(s)-v(s) \Vert ^2 \,ds\nonumber\\
 &+2\int_0^t h(s) (u(s)-v(s), \sigma(u(s))-\sigma(v(s)))\,dW(s)\nonumber\\
  &+2\int_0^th(s) (u(s)-v(s), f(u(s))-f(v(s)))\,ds\nonumber\\
&+2\int_0^th(s) \lb u(s)-v(s), B(u(s),u(s))-B(v(s),v(s))\rb \,ds\nonumber\\
 &+\int_0^th(s) |\sigma(u(s))-\sigma(v(s))|_H^2\,ds.
 \end{aligned}\end{equation}
Now, following the same argument as in the proof of (\ref{2.19}) and (\ref{2.20}),
by  Gronwall inequality, it can be  shown that $\mathbb{E}\,[h(t)|u(t)-v(t)|_H]=0$ for all $t\geq 0$ proving part  of  Theorem \ref{thm-main}.
\end{proof}

In order to formulate our next result let us recall that $L$ is an $H$-vialed process whose trajectories are locally of bounded variation.
We introduce the following notation
\begin{equation}\label{eqn-variation function}
\vert L \vert_t:= \Var([0,t])(L), \;\; t\in [0,\infty).
\end{equation}
For each $\omega \in\Omega$, the function $\mathbb{R}_+ \ni t \mapsto \vert L \vert_t$ is increasing. Hence,  one can associate with it
 a unique measure $m_{\vert L\vert}$, called the Lebesgue-Stieltjes measure.   This random  family of Lebesgue-Stieltjes measures  is traditionally denoted by $d\vert L\vert_t$.

If $m_{L}$ denotes the unique $H$-valued Lebesgue-Stieltjes measure
  associated with the $H$-valued process $L$, see \cite[Theorem III.2.1, p. 358]{Dinculeanu_1967}, then the measure $m_{\vert L\vert}$ is equal to the variation of the
  measure $m_{L}$, see \cite[pp. 361 and 363]{Dinculeanu_1967}.

\begin{proposition}\label{prop-support}
Let $(u,L)$ be the solution to equation (\ref{original equation}). Then $\mathbb{P}$-almost surely the measure $d|L|_t$ is supported on the set
\begin{equation}\label{eqn-support}
\{t \in [0,\infty):  u(t)\in \partial D\}=\{t \in [0,\infty): |u(t)|=1\}.
\end{equation}

\end{proposition}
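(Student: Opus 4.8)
The plan is to establish the support property in the equivalent analytic form
\[
\int_0^T \bigl(1-|u(t)|_H\bigr)\, d\vert L\vert_t = 0 \qquad \mathbb{P}\text{-a.s.,}
\]
for each fixed $T>0$. Indeed, $t\mapsto 1-|u(t)|_H$ is continuous, nonnegative on $\bar D$ (since $u(t)\in\bar D$ by \eqref{2.26}), and vanishes precisely on $\{t:|u(t)|=1\}$, while $d\vert L\vert_t$ is a pathwise positive measure; hence the displayed identity forces $d\vert L\vert_t$ to charge no mass to the open set $\{|u(t)|<1\}$, which is exactly the assertion. Because the integrand and the measure are both nonnegative, the integral is automatically $\geq 0$, so the entire problem reduces to proving the reverse inequality $\int_0^T(1-|u(t)|_H)\,d\vert L\vert_t\le 0$. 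This is the infinite-dimensional analogue of the classical Skorokhod contact condition.

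The starting point is the exact shape of the penalized reflection measure. Since $L^n(t)=-n\int_0^t\bigl(u^n(s)-\pi(u^n(s))\bigr)\,ds$, its variation measure is $d\vert L^n\vert_t=n\,|u^n(t)-\pi(u^n(t))|_H\,dt$, and the explicit form of $\pi$ gives $|u^n(t)-\pi(u^n(t))|_H=\bigl(|u^n(t)|_H-1\bigr)^{+}$. A direct computation that splits the integral over $\{|u^n|\le1\}$ and $\{|u^n|>1\}$ then yields the decisive sign
\[
\int_0^T \bigl(1-|u^n(t)|_H\bigr)\, d\vert L^n\vert_t
= -n\int_0^T \bigl(|u^n(t)|_H-1\bigr)_{+}^{2}\, dt
= -n\int_0^T |u^n(t)-\pi(u^n(t))|_H^{2}\, dt \le 0 .
\]

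The main work is to transport this sign to the limit. I would split
\[
\int_0^T\bigl(1-|u(t)|_H\bigr)\,d\vert L^n\vert_t
= \int_0^T\bigl(1-|u^n(t)|_H\bigr)\,d\vert L^n\vert_t
+ \int_0^T\bigl(|u^n(t)|_H-|u(t)|_H\bigr)\,d\vert L^n\vert_t ,
\]
where the first term is $\le 0$ by the identity above and the second is bounded in absolute value by $\sup_{t\in[0,T]}|u^n(t)-u(t)|_H\cdot \Var_H(L^n)([0,T])$, which tends to $0$ in probability by Lemma \ref{lem-aux-0} together with the uniform $L^2$–bound on the variations from Lemma \ref{lem-2.3}. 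Passing to a subsequence along which $u^n\to u$ and $L^n\to L$ converge almost surely and the error term vanishes, I would then invoke the lower semicontinuity of the weighted variation functional $v\mapsto\int_0^T(1-|u(t)|_H)\,d\vert v\vert_t$ on $C([0,T],V^\ast)$, proved exactly as the unweighted statement \eqref{eqn-2.024-1} around \eqref{eqn-2.024-2} but inserting the fixed continuous nonnegative weight $1-|u(\cdot)|_H$ evaluated at the partition tags. This gives $\int_0^T(1-|u|_H)\,d\vert L\vert_t\le\liminf_n\int_0^T(1-|u|_H)\,d\vert L^n\vert_t\le 0$, and combined with the trivial lower bound it produces the desired identity; letting $T$ range over $\mathbb{N}$ then yields the statement on all of $[0,\infty)$.

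The step I expect to be the principal obstacle is precisely this weighted lower semicontinuity, together with the clean justification that for a continuous path $v$ of bounded variation and a continuous weight $w\ge 0$ one has $\int_0^T w\,d\vert v\vert_t=\sup\sum_i w(s_i)\,|v(t_{i+1})-v(t_i)|_H$ over tagged partitions, so that the argument of \eqref{eqn-2.024-2} applies verbatim. Once this is in place the preservation of the sign in the limit is routine. Finally, since all convergences are only in probability, I would handle them throughout by extracting almost surely convergent subsequences, which is harmless because the limiting identity $\int_0^T(1-|u|_H)\,d\vert L\vert_t=0$ is independent of the chosen subsequence.
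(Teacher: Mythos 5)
Your overall strategy is the same as the paper's: transfer a sign (respectively, vanishing) property of the penalized reflection terms $L^n$ to the limit $L$ by means of a weighted lower-semicontinuity property of the variation along the convergence $L^n\to L$ in $C([0,T],V^\ast)$. The differences are worth recording but modest. You use the single weight $1-|u(\cdot)|_H$ and conclude by a sign argument (the weighted integral is $\le 0$ by the limit transfer and $\ge 0$ trivially), whereas the paper tests against $C^1$ functions $\psi$ compactly supported in $[0,1)$, for which the penalized quantity $n\int_0^T \psi(|u^n(s)|_H)\,|u^n(s)-\pi(u^n(s))|_H\,ds$ vanishes \emph{identically} because the supports are disjoint, and then recovers the indicator $1_{[0,1)}$ by approximation. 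You also pass to the limit pathwise along almost surely convergent subsequences, while the paper works in expectation via the Fatou Lemma, see \eqref{2.37}, and dominated convergence, see \eqref{2.38}; your swap of $|u|_H$ for $|u^n|_H$ inside the weight, controlled by $\sup_{t\in[0,T]}|u^n(t)-u(t)|_H\cdot \Var_H(L^n)([0,T])$ and the uniform bound \eqref{eqn-2.23}, is the exact analogue of \eqref{2.38}. All of this part of your argument is sound.

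The genuine defect is the representation you propose in order to prove the weighted lower semicontinuity: for a continuous weight $w\ge 0$ and a continuous path $v$ of bounded variation it is \emph{not} true that $\int_0^T w\,d|v|_t=\sup\sum_i w(s_i)\,|v(t_{i+1})-v(t_i)|_H$ over tagged partitions with arbitrary tags $s_i\in[t_i,t_{i+1}]$. Counterexample: $H=\mathbb{R}$, $v(t)=t$ on $[0,1]$ (so $d|v|_t=dt$), and $w$ a continuous bump with $w(\tfrac12)=1$, supported in $[\tfrac14,\tfrac34]$, with $\int_0^1 w\,dt=\tfrac14$; the one-interval partition tagged at $s=\tfrac12$ gives the value $1>\tfrac14$. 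Thus the sup functional strictly dominates the integral in general, and its lower semicontinuity does not yield the inequality $\int_0^T w\,d|L|_t\le\liminf_n\int_0^T w\,d|L^n|_t$ that you need: the comparison at the $L^n$ end goes the wrong way, since the sup evaluated at $L^n$ may exceed $\int_0^T w\,d|L^n|_t$. The repair is standard and is precisely what the paper does when proving \eqref{2.36-2}: replace the tag value $w(s_i)$ by $\min_{[t_i,t_{i+1}]}w$, i.e.\ approximate $w$ from below by simple functions $\sum_k a_k\chi_{(t_k,t_{k+1}]}$ with $a_k=\min_{[t_k,t_{k+1}]}w$, apply the lower semicontinuity \eqref{2.36-1} of $\Var_H(\cdot)((t_k,t_{k+1}])$ termwise, and absorb the approximation error by $\varepsilon\,|L|_T$. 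With that substitution your proof closes; without it, the key step fails as written.
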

\vskip 0.3cm
\begin{proof}[Proof of Proposition \ref{prop-support}]
The assertion of the proposition  is equivalent to 
$\mathbb{P}$-almost surely
\begin{equation}\label{2.35}
\int_0^T1_{[0,1)}(|u(t)|_H)\,d|L|_t=0,
\end{equation}
where on the LHS we have the Lebesgue-Stieltjes integral.

To this end, by an approximation argument it suffices to prove that for every $C^1$ function with compact support $\psi: [0,1) \to [0,\infty)$, the following equality holds
\begin{equation}\label{2.36}
\int_0^T\psi(|u(t)|_H)\,d|L|_t=0, \quad a.s.,
\end{equation}
or equivalently,
\begin{equation}\label{2.36-0}
E\left [\int_0^T\psi(|u(t)|_H)\,d|L|_t\right ]=0.
\end{equation}
Let us observe that since the function $\psi(|u(\cdot)|_H)$ is continuous, the  Lebesgue-Stieltjes integral on the LHS of  \eqref{2.36} is equal to the
Riemann-Stieltjes integral.


Recall that
\begin{equation}L^n_t=-n\int_0^t(u^n(s)-\pi(u^n(s)))\,ds, \;\; t \in [0,T].\end{equation}
As $L^n\rightarrow L$ in $C([0, T], V^\ast)$ in probability, by the lower semi-continuity of the function $\Var_{H}(\cdot)((s, t])$ defined  in \eqref{eqn-lsc of total varaintion} (with $[0, T]$ replaced by $(s, t]$)  we conclude that
\begin{equation}\label{2.36-1}
\Var_{H}(L)((s,t])\leq \liminf_{n\rightarrow \infty}\Var_{H}(L^n)((s,t]), \mbox{ for any $s< t$}.
\end{equation}
This implies that for any non-negative, continuous function $\eta(\cdot ): [0, T]\rightarrow [0, \infty)$, it holds that
\begin{equation}\label{2.36-2}
\int_0^T\eta(t)\,d | L|_t\leq \liminf_{n\rightarrow \infty}\int_0^T\eta(t)\,d | L^n|_t.
\end{equation}
Indeed, since $\eta$ is uniformly continuous on $[0, T]$, for any given $\varepsilon>0$ there exist a partition $0=t_0<t_1<\cdots <t_m=T$ and a simple function $\eta_m(t)=\sum_{k=0}^{m-1}a_k\chi_{(t_k, t_{k+1}]}(t)$ such that $\eta_m(t)\leq \eta(t), t\in [0, T]$, and
$\sup_{0\leq t\leq T}|\eta_m(t)-\eta(t)|\leq \varepsilon$. For example, take $a_k=\min_{t\in [t_k, t_{k+1}]}\eta(t)$. For the simple function $\eta_m$,
using the Riemann sum it follows from (\ref{2.36-1}) that
\begin{equation}\label{2.36-3}
\int_0^T\eta_m(t)\,d | L|_t\leq \liminf_{n\rightarrow \infty}\int_0^T\eta_m(t)\,d | L^n|_t.
\end{equation}
Therefore we have
\begin{equation}\label{2.36-4}
\begin{aligned}
&\int_0^T\eta(t)\,d | L|_t\leq \int_0^T\eta_m(t)\,d | L|_t+|L|_T\varepsilon \nonumber\\
&\leq \liminf_{n\rightarrow \infty}\int_0^T\eta_m(t)\,d | L^n|_t+|L|_T\varepsilon\nonumber\\
&\leq \liminf_{n\rightarrow \infty}\int_0^T\eta(t)\,d | L^n|_t+|L|_T\varepsilon,
\end{aligned}\end{equation}
here we have used the fact that $\eta_m(t)\leq \eta(t), t\in [0, T]$.
Since $\varepsilon$ is arbitrary, (\ref{2.36-2}) is proved.
In particular, letting $\eta(t)=\psi(|u(t)|_H)$ and using the Fatou Lemma, we obtain
\begin{equation}\label{2.37}
E\left[\int_0^T\psi(|u(t)|_H)\,d | L|_t\right] \leq \liminf_{n\rightarrow \infty}E\left[\int_0^T\psi(|u(t)|_H)\,d | L^n|_t\right].
\end{equation}
On the other hand, recalling $C=\sup_nE[(|L^n|_T)^2]< \infty$, by the dominated convergence theorem,   we infer that
\begin{align}\label{2.38}
\begin{split}
&E\left[\left|\int_0^T\psi(|u(t)|_H)\,d | L^n|_t-\int_0^T\psi(|u^n(t)|_H)\,d | L^n|_t\right|\right] \nonumber\\
&\leq E\left[\sup_{t \in [0,T]}|\psi(|u(t)|_H)-\psi(|u^n(t)|_H)||L^n|_T\right]
\\
&\leq C^{\frac{1}{2}} \left(E\left[\sup_{t \in [0,T]}|\psi(|u(t)|_H)-\psi(|u^n(t)|_H)|^2\right]\right)^{\frac{1}{2}}
\rightarrow 0.
\end{split}
\end{align}
 Together with  (\ref{2.37}) we deduce that
\begin{equation*}
\begin{split}
E\left[\int_0^T\psi(|u(t)|_H)\,d | L|_t\right] &\leq \liminf_{n\rightarrow \infty}E\left[\int_0^T\psi(|u^n(t)|_H)\,d | L^n|_t\right]
\\ &=\liminf_{n\rightarrow \infty}E\left[n\int_0^T\psi(|u^n(s)|_H)|u^n(s)-\pi(u^n(s))|_H\,ds\right]=0.
\end{split}\end{equation*}
This completes the proof of Proposition \ref{prop-support}.
\end{proof}
Therefore,  the proof of Theorem \ref{thm-main} is now complete. \end{proof}

\section{Reflected Stochastic Navier Stokes Equations}\label{sec-Examples}
The main motivation of this paper is to treat the stochastic Navier Stokes Equations on a two dimensional domains $D$. Since we do not require any compactness of the embeddings, our domain can be unbounded, for example, the whole Euclidean space
$\mathbb{R}^2$. We will concentrate on this case and mostly follow a recent paper \cite{Brz+Ferrario_2019} by the first named author and Ferrario.

For a natural number $d$ and $p \in [1,\infty)$, let $L^p=L^p(\mathbb R^d,\mathbb R^d)$ be the classical Lebesgue space of all $\mathbb R^d$-valued Lebesgue measurable functions $v=(v^1,\ldots,v^d)$ defined on $\mathbb R^d$ endowed with the following classical  norm
\[
\|v\|_{L^p}=\left(\sum_{k=1}^d \|v^k\|_{L^p(\mathbb R^d)}^p\right)^{\frac1p}.
\]
For $p=\infty$, we set
$\|v\|_{L^\infty}=\max_{k=1}^d \|v^k\|_{L^\infty(\mathbb R^d)}$. \\
Set $J^s=(I-\Delta)^{\frac s2}$.
We define the generalized Sobolev spaces of divergence free vector
distributions, for $s \in \mathbb R$,  as
\begin{equation}\begin{aligned}
H^{s,p}&=\{u \in {\mathcal S}^\prime(\mathbb R^d,\mathbb{R}^d):
\|J^s u\|_{L^p}<\infty\},
\\
H^{s,p}_\sol&=\{u \in H^{s,p} : \divv  u =0 \}.
\end{aligned}\end{equation}
It is well known that $J^\sigma$ is an isomorphism between $H^{s,p}$ and $H^{s-\sigma,p}$
for $s \in \mathbb R$ and $1< p < \infty$.
Moreover $H^{s_2,p} \subset H^{s_1,p}$ when $s_1<s_2$.
In particular, for the Hilbert case $p=2$
we set $\rH=H^{0,2}_\sol$ and, for $s\neq 0$, $\rH^s=H^{s,2}_\sol$, so that (\textbf{Warning!}), $\rH^s$ is a proper closed subspace of the classical Sobolev space usually denoted by the same symbol.
In particular, we put
\[
\rH=\{v \in L^2(\mathbb R^d,\mathbb{R}^d): \divv  v =0\}
\]
with scalar product inherited from $L^2(\mathbb R^d,\mathbb{R}^d)$.

We will also denote by  $\langle\cdot, \cdot \rangle$ the
 duality bracket between $(H^{s,p})^\prime$ and $H^{s,p}$ spaces. Note that for $p \in [1,\infty)$, the space $(H^{s,p})^\prime$ can be identified with $(H^{-s,p^\ast})$, where $\frac1p+\frac1{p^\ast}=1$.

\dela{
Moreover,
$H_{\mathrm{loc}}$ is the  space $H$ with the topology
generated by the family of
semi-norms $\|v\|_{H_N}=\left(\int_{|\xi|<N}|v(\xi)|^2d\xi\right)^{1/2}$,
$N\in \mathbb N$, and $L^2(0,T;H_{\mathrm{loc}})$ is the space $L^2(0,T;H)$ with the topology
generated by the family of semi-norms $\|v\|_{L^2(0,T;H_N)}$,
$N\in \mathbb N$.
By $H_{\mathrm{w}}$  we denote the space $H$  with the weak topology and by
$C([0,T];H_{\mathrm{w}})$ the space of $H$-valued  weakly continuous
functions with the topology of uniform weak convergence on $[0,T]$;
in particular $v_n \to v$ in $C([0,T];H_{\mathrm{w}})$ means
\[
\lim_{n\to \infty} \sup_{0\le t\le T}|(v_n(t)-v(t),h)_H|=0
\]
for all $h \in H$.
Notice that $v(t) \in H$ for any $t$ if $v \in C([0,T];H_{\mathrm{w}})$.

From \cite{HW_1995} one knows that there exists a  separable Hilbert space $U$
such that $U$ is a dense subset
of $H^1$ and is compactly embedded in $H^1$.
We also have that
\[
U \subset H^1 \subset H \simeq H^\prime\subset H^{-1}\subset U^\prime
\]
with dense and continuous embedding, but in addition $H^{-1}$
is compactly embedded in $U^\prime$.}

Now we define the operators appearing in the abstract formulation. Assume that $s \in \mathbb R$ and  $1\le p<\infty$.
Let $A_0=-\Delta$; then
$A_0$ is a linear unbounded  operator in $H^{s,p}$ and bounded from  $H^{s+2,p}$ to $H^{s,p}$.
 Moreover, the spaces   $H^{s,p}_\sol$
 are invariant w.r.t. $A_0$  and the corresponding operator will be denoted by $A$. Let us observe that
 $A$ is a linear unbounded  operator in $H^{s,p}$ and bounded from  $H^{s+2,p}_\sol$ to $H^{s,p}_\sol$.
 The operator $-A_0$  generates
a contractive and analytic $C_0$-semigroup $\{e^{-tA}\}_{t\ge 0}$ on  $H^{s,p}$ and therefore,
the operator $-A$  generates
a contractive and analytic $C_0$-semigroup $\{e^{-tA}\}_{t\ge 0}$ on  $H^{s,p}_\sol$.
Moreover, for $t>0$ the operator $e^{-tA}$ is bounded
from $H^{s,p}_\sol$ into $H^{s^\prime,p}_\sol$
with $s^\prime>s$ and there exists a constant $M$ (depending on $s^\prime-s$ and $p$)
such that
\begin{equation}\label{semigruppo}
\|e^{-tA}  \|_{\mathcal L(H^{s,p}_\sol;H^{s^\prime,p}_\sol)}\le M (1+t^{-(s^\prime-s)/2}) .
\end{equation}
We have $A: H^1 \to  H^{-1}$ as a linear bounded operator and
\[
\langle Av,v\rangle=\|\nabla v\|_{L^2}^2, \;\;\; v\in H^1,
\]
where
\[
\|\nabla v \|_{L^2}^2=\sum_{k=1}^d \|\nabla v^k\|^2_{L^2}, \;\;\; v\in H^1.
\]

Moreover we have
\begin{equation}\label{quadrati}
\|v\|_{H^1}^2=\|v\|^2_{L^2}+\|\nabla v\|_{L^2}^2 .
\end{equation}

We define a bounded
trilinear form $b: H^1\times H^1 \times  H^{1}\to \mathbb{R}$ by
\[
b(u,v,z)=\int_{\mathbb R^d} (u(\xi)\cdot \nabla )v(\xi) \ \cdot z(\xi) \ d\xi,\;\; u,v,z \in H^1.
\]
and the corresponding bounded
 bilinear operator $B:H^1\times H^1\to H^{-1}$ via the
trilinear form
\[
\langle B(u,v),z\rangle= b(u,v,z) ,\;\; u,v,z \in H^1.
\]
This operator satisfies, for all $u,v,z \in H^1$,
\begin{equation}\label{scambio}
\langle B(u,v),z\rangle =-\langle B(u,z),v\rangle , \qquad
\langle B(u,v),v\rangle =0.
\end{equation}
Note that \eqref{scambio} implies a weaker version of it, i.e.
\begin{equation}\label{scambio-weak}
\langle B(u,u),u\rangle =0, \mbox{ for all $u \in H^1$}.
\end{equation}

$B$ can be extended to a bounded bilinear operator from
$H^{0,4}_\sol\times H^{0,4}_\sol$ to $H^{-1}$ with
\begin{equation}\label{bL4}
\|B(u,v)\|_{H^{-1}}\le \|u\|_{L^4}\|v\|_{L^4},
\end{equation}
see \cite[(2.29)]{Tenam1995}. Moreover, for any $a>\frac d2+1$, $B$ can be extended to a bounded bilinear operator from
$H\times H$ to $H^{-a}$ with
\begin{equation}\label{stimaB-a}
\|B(u,v)\|_{H^{-a}}\le C \|u\|_{L^2}\|v\|_{L^2},
\end{equation}
see \cite[Lemma 2.1]{Tenam1995}.
Let us also observe that in view of the so called Ladyzhenskaya inequality, see   \cite[Lemma 3.3.3]{Temam_2001},
inequality \eqref{bL4} implies that  there exists $C>0$ such that for all  $u\in H^1$

\begin{equation}\label{bL5}
\|B(u,u)\|_{H^{-1}}\le \sqrt{2} \vert u\vert_{L^2}^{\frac12} \vert \nabla u\vert _{L^2}^{\frac12} \vert u\vert _{L^2}^{\frac12} \vert \nabla u\vert _{L^2}^{\frac12}.
\end{equation}
In particular, this proves that inequality \eqref{1.1} in part (b) of Assumption \textbf{(A.2)}, or equivalently, inequality \eqref{1.2-b}.

Once and for all we denote by $C$ a generic constant, which may vary
from line to line; we number it if
we need to identify it.

Finally, we define the noise forcing term.
Given a real separable Hilbert space $\rK$
we consider a $\rK$-cylindrical Wiener process $\{W(t)\}_{t \ge 0}$ defined on a
stochastic basis $(\Omega,\mathbb F, \{\mathbb F_t\}_{t\ge 0},\mathbb P)$ satisfying the usual conditions.
For the covariance $\sigma$ of the noise we make the following assumptions.
\begin{enumerate}
	\item[{\bf (G1)}]
	the mapping
	$\sigma:H\to \gamma(\rK;H)$
	is well-defined and
\dela{	\[
	\sup_{v \in H} \|G(v)\|_{\gamma(Y;H^{-g})}=:K_{g,2}<\infty
	\]
	\item[{\bf (G2)}]
	the mapping  $G:H\to \gamma(Y;H^{- g ,4}_\sol)$
	is  well defined  and
	\[
	\sup_{v \in H} \|G(v)\|_{\gamma(Y;H^{-g,4}_\sol)}=:K_{g,4}<\infty
	\]
	\item[{\bf (G3)}] if assumption {\bf (G1)} holds, then for any
	$\phi \in H^{-g}$ the mapping $H\ni v\mapsto G(v)^* \phi \in Y$ is continuous
	when in  $H$ we consider the Fr\'echet topology inherited from
	the space $H_{\mathrm{loc}}$ or the weak topology of $H$
	\item[{\bf (G4)}] if assumption {\bf (G1)} holds,
	then $G$ extends to} is a Lipschitz continuous map
	$G:H\to \gamma(\rK;H)$, i.e.
	\[
	\exists\ L>0: \; \|\sigma(v_1)-\sigma(v_2)\|_{\gamma(\rK;H)}
	\le L\|v_1-v_2\|_{H}
	\]
	for all  $v_1, v_2 \in H$.
\end{enumerate}
\dela{We remind that the case $g=0$ in {\bf (G1)} would give a
Hilbert-Schmidt operator in $H$, which is the case considered in
\cite{Brz+Mot_2013,Brz+Ondr+Motyl_2016}.
}

We consider the  stochastic damped Navier-Stokes equations,
that is the equations of motion of a viscous
incompressible fluid with two forcing terms, one is random  and the
other one is deterministic. These equations are
\begin{equation}\label{sist:ini}
\begin{cases}
\partial_t u+[-\nu \Delta u +\gamma u+(u \cdot \nabla)u +\nabla p]\ dt
= \sigma(u)\ \partial_t W+f\ dt
\\
\divv  u=0
\end{cases}
\end{equation}
where the unknowns are the vector velocity $u=u(t,\xi)$
and  the scalar pressure $p=p(t,\xi)$ for $t\ge 0$ and  $\xi \in \mathbb R^d$.
By $\nu>0$ we denote  the kinematic viscosity  and by $\gamma> 0$  the
 sticky  viscosity, see for instance \cite{Gal_2002} and  \cite{CR}. When
 $\gamma=0$ \eqref{sist:ini} reduce to the classical stochastic Navier-Stokes equations.
The notation $ \partial_t W$ on the right hand side  is  for the space correlated  and  white in time
noise and $f$ is a deterministic forcing term.  We consider a
multiplicative term
$\sigma(u)$ keeping track of the fact that the noise may depend on the
velocity.

Projecting equations \eqref{sist:ini} onto the space $H$  of divergence
free vector fields, we get
the  abstract form of the stochastic damped
Navier-Stokes equations \eqref{sist:ini}
\begin{equation}\label{sns}
du(t)+[Au(t)+\gamma u(t)+B\left(u(t),u(t)\right)]\,dt
=
\sigma(u(t))\,dW(t)+f(t)\ dt
\end{equation}
with the initial condition
\begin{equation}\label{sns-ic}
u(0)=u_0
\end{equation}
where the initial velocity $u_0:\Omega\to H$ is  an $\mathcal{F}_0$-measurable random variable .
Here $\gamma>0$ is fixed  and for simplicity we have put $\nu=1$.
The case $\gamma=0$ was considered in, e.g.   \cite{Brzezniak+Ferrario_2017}.
We assume that $\gamma>0$ so that the operator $A+\gamma I$ satisfies the assumption \eqref{eqn-Poincare}. However, one can slightly modify the proofs in the paper in such a way that assumption \eqref{eqn-Poincare} is replaced by  the following one. There exists $\alpha>0$  such that
\begin{equation}\label{eqn-Poincare-weak}
( Au,u)+\alpha ( u,u)  \geq 0, \;\;\; u \in D(A).
\end{equation}
 Here we consider classical strong solutions.

\begin{definition}	[strong  solution]
	We say that
a progressively measurable process $u:[0,T]\times \Omega\to H$ with
		${\mathbb P}$-almost all  paths satisfying
		\[
		u \in C([0,T];H)\cap L^2(0,T;H^1)
		\]
is a strong solution to problem  \eqref{sns} if and only if
	\begin{enumerate}
		\item[$\bullet$]  $ u(0)=u_0$
		\item[$\bullet$]
 for any $t \in [0,T], \psi \in H^2$, ${\mathbb P}$-a.s.,
		\begin{equation}
\label{sol-path}
\begin{aligned}
		( u(t),\psi)_H
		&+\int_{0}^t \langle Au(s) ,\psi\rangle ds
		+\gamma \int_{0}^t ( u(s) ,\psi)_H ds
		\\\nonumber
		&+\int_{0}^t \langle B(u(s),u(s)),\psi \rangle ds
		\\
		&=
		( u(0),\psi)_H
		+\int_{0}^t \langle f(s) ,\psi\rangle ds
		+\langle \int_{0}^t \sigma(u(s))\,d W(s),\psi\rangle.
		\end{aligned}\end{equation}
	\end{enumerate}
\end{definition}
Now consider the reflected stochastic Navier-Stokes equation:
\begin{equation}\label{reflected-Navier-Stokes}
du(t)+[Au(t)+\gamma u(t)+B\left(u(t),u(t)\right)]\,dt
=
\sigma(u(t))\,dW(t)+f(t)\ dt+dL(t)
\end{equation}
with the initial condition
\begin{equation}\label{sns-ic-2}
u(0)=u_0\in \bar{D}
\end{equation}

Our main result, Theorem 4.1,  applies in this setting. We have the following
	\begin{theorem}\label{Navier-Stokes}
The reflected stochastic evolution equation (\ref{reflected-Navier-Stokes})  admits a unique solution $(u, L)$ that satisfies, for $T> 0$,
\begin{equation}\label{2.15-b}
\mathbb{E}\,\bigl[\sup_{t \in [0,T]}|u(t)|_H^2\, +\, \int_0^T \Vert u(t) \Vert^2\,dt\,\bigr]< \infty.
\end{equation}
\end{theorem}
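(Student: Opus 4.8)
The plan is to deduce Theorem \ref{Navier-Stokes} directly from the general result Theorem \ref{thm-main}, by checking that the damped stochastic Navier--Stokes data fit the abstract framework of Section \ref{sec-2}. Concretely, I would take the Hilbert space to be $H=H^{0,2}_\sol$ and the Gelfand triple $V=H^1\embed H\embed H^{-1}=V^\ast$, and --- crucially --- I would let the abstract operator be $A+\gamma I$ rather than $A$ alone, thereby absorbing the sticky--viscosity term $\gamma u$ into the linear part. The bilinear term $B(u,u)$, the coefficient $\sigma$ of (G1), and the deterministic forcing $f(t)$ then play the roles of the abstract $B$, $\sigma$ and $f$. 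Once assumptions (A.1)--(A.3) are verified for these data, Theorem \ref{thm-main} yields at once both existence and uniqueness of the reflected pair $(u,L)$, and the estimate \eqref{2.15-b} is nothing but \eqref{2.15}.

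I would verify the three assumptions in turn. For the coercivity \eqref{eqn-Poincare}: since $\langle Av,v\rangle=\|\nabla v\|_{L^2}^2\ge 0$ and $\gamma>0$, one has $((A+\gamma I)v,v)=\|\nabla v\|_{L^2}^2+\gamma|v|_{L^2}^2\ge \gamma|v|_{L^2}^2$, so \eqref{eqn-Poincare} holds with $\lambda_1=\gamma$; this is exactly where the damping is used, and it also shows that $D((A+\gamma I)^{1/2})=H^1$ with a norm equivalent, by \eqref{quadrati}, to the $H^1$-norm. For (A.2): part (a) is the antisymmetry \eqref{scambio}, while part (b) --- equivalently \eqref{1.2-b} --- is precisely the Ladyzhenskaya bound \eqref{bL5} (with constant $\sqrt2\le 2$, which is permitted since the constant in \eqref{1.1} is immaterial); and since $\|\nabla v\|_{L^2}\le \|v\|$ for the damped norm, the bound transfers to the $V$-norm without change. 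For (A.1): the Lipschitz continuity of $\sigma$ is assumption (G1), which is exactly \eqref{locally lipschitz-2}, and the deterministic source $f(t)$ does not depend on the state, hence satisfies the $f$-part of (A.1) trivially. Finally (A.3) is the standing hypothesis $u_0\in\bar{D}$ in \eqref{sns-ic-2}.

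The only genuine discrepancy with the hypotheses of Theorem \ref{thm-main} as literally stated is that the noise here is a cylindrical Wiener process with a Hilbert--Schmidt-valued multiplicative coefficient, whereas Theorem \ref{thm-main} is phrased for a scalar Brownian motion; this is precisely the extension flagged in Remark \ref{rem-one dim BM}, for which the relevant Lipschitz hypothesis becomes \eqref{locally lipschitz-2}. Every estimate of Section \ref{sec-approximation} uses $\sigma$ only through its Hilbert--Schmidt norm and the Burkholder inequality, so Lemmas \ref{lem-2.2}--\ref{lem-2.4} and the contraction argument of Lemma \ref{lem-aux-0} carry over verbatim. The one further point requiring a routine remark is the time-dependence of $f(t)$: being adapted, deterministic and independent of $u$, it contributes only a term of linear growth to the energy identities, so the a priori bounds and the uniqueness argument are unaffected provided $f\in L^2([0,T];V^\ast)$. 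I therefore expect no essential obstacle beyond bookkeeping: the substantive analytic work --- the uniform estimates on the penalised approximations and the identification of the reflecting measure --- has already been carried out in the proof of Theorem \ref{thm-main}, and the proof here reduces to verifying that the concrete Navier--Stokes nonlinearity meets assumption (A.2), which is supplied by \eqref{scambio} and \eqref{bL5}.
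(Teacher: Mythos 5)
Your proposal is correct and follows essentially the same route as the paper: the paper likewise absorbs the damping into the linear part so that $A+\gamma I$ satisfies \eqref{eqn-Poincare}, verifies (A.2) via the antisymmetry \eqref{scambio} and the Ladyzhenskaya bound \eqref{bL5}, and invokes Theorem \ref{thm-main} together with Remark \ref{rem-one dim BM} for the cylindrical noise under (G1). The only difference is one of exposition --- the paper leaves this verification largely implicit after setting up Section \ref{sec-Examples}, whereas you spell it out (including the minor point about the time-dependent forcing $f(t)$, which the paper does not address explicitly).
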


\noindent
\dela{All the terms in \eqref{sol-path} make sense; in particular
the trilinear term is well defined thanks to \eqref{bL4} which
provides the following estimate
\[
\Big|\int_{0}^t \langle B(\hat v(s),\hat v(s)),\psi \rangle ds\Big|
\le
\|\psi\|_{H^1}\int_0^t\|\hat v(s)\|_{L^4}^2ds.
\]
Let us point out that in the following sections we shall prove the existence of a martingale
solution with a time integrability higher than
$\hat v \in L^2(0,T;L^4)$.
As far as the stochastic integral is concerned, by assuming {\bf (G1)}
we obtain that the random variable
$\int_{0}^t G(\hat v(s))\,d\hat w(s)$ belongs to
$H^{-g}$ (${\mathbb P}$-a.s.), see
Proposition 5.2 in \cite{Brzezniak+Ferrario_2017}.
\\
Initial deterministic velocity   $x \in H$ corresponds to $\mu=\delta_x$.
}

\vskip 0.5cm
\noindent {\bf Acknowledgement}. This work is partly supported by the National Natural Science Foundation of China (NSFC) (No. 12131019, No. 11721101).


\begin{thebibliography}{99}



\bibitem[BP]{Barbu+Precupanu_2012}
V. Barbu and T. Precupanu,  \textsc{Convexity and optimization in Banach spaces.} Fourth edition.
Springer Monographs in Mathematics. Springer, Dordrecht,  2012.

\bibitem[BDaPT-1]{BD-1}
V. Barbu, G. Da Prato and L. Tubaro, \textit{A Reflection Type Problem for The Stochastic 2D Navier-Stokes Equations with Periodic Conditions}. Electr. Comm. in Probab. 16, 304-313 (2011)
\bibitem[BDaPT]{BDT}
V. Barbu, G. Da Prato and L. Tubaro, \textit{The Stochastic Reflection Problem in Hilbert Spaces}. Communications in Partial Differential Equations,  37:2, 352-367 (2012).

\bibitem[BDM]{Brz+Dh+Mariani_2018}
Z. Brze\'{z}niak,  G.  Dhariwal and M. Mariani, \textit{2D constrained Navier-Stokes equations}. J. Differential Equations 264, no. 4, 283--2864 (2018)

\bibitem[BD]{Brz+Dh_2018}
Z. Brze\'{z}niak and   G.  Dhariwal, \textit{Stochastic Constrained Navier-Stokes Equations on $\mathbb{T}^2$}, J. Differential Equations  285  (2021), 128--174. MR4228405, https://doi.org/10.1016/j.jde.2021.02.058

\bibitem[BF1]{Brzezniak+Ferrario_2017}
Z. Brze\'zniak and B. Ferrario:
A note on stochastic Navier-Stokes equations with not regular
multiplicative noise,
{\it Stoch. PDE: Anal. Comp.}  5 (2017), no.~1, 53-80


\bibitem[BF2]{Brz+Ferrario_2019}  Z. Brze{\'z}niak and  B. Ferrario, \textit{Stationary solutions for stochastic damped Navier-Stokes equations in $\mathbb R^d$}, Indiana Univ. Math. J.  68, No. 1, 105-138 (2019);


\bibitem[BMS]{BMS_2005}   Z. Brze{\'z}niak, B Maslowski and J Seidler; \textit{Stochastic nonlinear beam equations},  Probability Theory and Related Fields \textbf{132},  119 - 149 (2005)


\bibitem[BM]{Brz+Mot_2013}
Z. Brze\'{z}niak and E. Motyl,   \textit{Existence of a martingale solution of the stochastic
  {N}avier-{S}tokes equations in unbounded 2{D} and 3{D} domains},  {J. Differential Equations}, 254(4), 1627--1685, 2013

\bibitem[BMO]{Brz+Motyl+Ondr_2016}  Z. Brze{\'z}niak,   E. Motyl and M. Ondrej{\`a}t,  \textit{Invariant measure for  the stochastic Navier-Stokes equations in  unbounded 2D domains}, Ann. Prob.  \textbf{45}, no 5 , 3145-3201 (2017)


\bibitem[BP]{BP-max}   Z. Brze{\'z}niak and S. Peszat, \textit{Maximal inequalities and exponential estimates for stochastic convolutions in Banach spaces}, pp. 55–64 in  {\sc Stochastic processes, physics and geometry: new interplays, I (Leipzig, 1999)},  CMS Conf. Proc., 28, Amer. Math. Soc., Providence, RI, 2000


\bibitem[CDaPF]{Cannarsa+DaPrato+Frankowska_2018} P. Cannarsa, G. Da Prato and H.  Frankowska,  \textit{Invariance for quasi-dissipative systems in Banach spaces}. J. Math. Anal. Appl. 457, no. 2, 1173--1187 (2018)


\bibitem[CR]{CR}
P. Constantin and F.  Ramos,
\textit{Inviscid limit for damped and driven incompressible
Navier-Stokes equations in $\mathbb R^2$},
Comm. Math. Phys. 275, no.~2, 529-551 (2007)


\bibitem[CM]{M+Ch_2010} I. Chueshov and A.  Millet, \textit{Stochastic 2D hydrodynamical type systems: well posedness and large deviations}, Appl. Math. Optim.   {61} (3)  379--420 (2010)


\bibitem[DaPZ]{DaPZ} G. Da Prato and J. Zabczyk: {\sc Stochastic Equations in Infinite Dimensions}. Cambridge University Press, 2014.

\bibitem[DalM]{DalMaso_1993} G. Dal Maso,
{\sc An introduction to  Gamma convergence},
Progress in Nonlinear Differential Equations and their Applications, Birkhauser Boston, Inc., Boston, MA, 1993.


\bibitem[DMZ]{DMZ} R. C. Dalang, C. Mueller and  L. Zambotti,  \textit{Hitting
properties of parabolic SPDE's with reflection}, Ann. Probab.
34(4)(2006), 1423-1450.

\bibitem[D]{Dinculeanu_1967} N. Dinculeanu, \textsc{Vector measures}. International Series of Monographs in Pure and Applied Mathematics, Vol. 95 Pergamon Press, Oxford-New York-Toronto, Ont.; VEB Deutscher Verlag der Wissenschaften, Berlin 1967 


\bibitem[DP1]{MP} C. Donati-Martin and  E. Pardoux,  \textit{White noise driven SPDEs with reflection}. Probab. Theory Relat. Fields 95, 1-24(1993).

\bibitem[DP2]{DP} C. Donati-Martin and  E. Pardoux,  \textit{ EDPS r$\acute{e}$fl$\acute{e}$chies et calcul de Malliavin}. Bull. Sci. Math. 121(5), 405-422 (1997)

\bibitem[DZ]{DebZamb} A. Debussche and  L. Zambotti, \textit{ Conservative stochastic Cahn-Hilliard equation with reflection}. Annals of Probability 35(5) (2007) 1706-1739.


\bibitem[F]{Friedman_1975_v1} A. Friedman, {\sc Stochastic differential equations and applications. Vol. 1.} Probability and Mathematical Statistics, Vol. 28. Academic Press,  New York-London, 1975

\bibitem[FO]{FO}T. Funaki and  S. Olla, \textit{ Fluctuations for $\nabla \phi$
interface model on a wall}, Stochastic Process. Appl. 94(1) (2001) 1-27.




\bibitem[Gal]{Gal_2002} G. Gallavotti:
{\it Foundations of fluid dynamics.}
 Texts and Monographs in
Physics. Springer-Verlag, Berlin, 2002


\bibitem[GP]{GP} A. Gegout-Petit and  E. Pardoux,  \textit{ Equations Diff\'erentielles
Stochastiques R\'etrogrades R\'efl\'echies Dans Un Convexe}, Stochastics and Stochastics Reports 57 (1996) 111-128.

\bibitem[Geis]{Geis} S. Geis, {\sc An introduction to probability theory, II},\\ http://users.jyu.fi/~geiss/lectures/probability-2.pdf

\bibitem[G]{Gelfand_1938} I. Gelfand, \textit{Abstrakte Funktionen und lineare Operatoren}, Rec. Math. [Mat. Sbornik] N.S., 4(46):2, 235--286 (1938)

\bibitem[HP]{HP}U. G. Haussmann and  E. Pardoux,  \textit{ Stochastic variational
inequalities of parabolic type}, App. Math. Optim. 20, 163-192 (1989)


\bibitem[H]{HPR} M. Heida, R. Patterson and  M.  Renger, \textit{Topologies and measures on the space of functions of bounded variation taking values in a Banach or metric space}.
 J. Evol. Equ.  19,   111--152 (2019)

\bibitem[HW]{HW_1995}
K. Holly and M. Wiciak:
Compactness method applied to an abstract nonlinear parabolic equation,
{\it Selected problems of Mathematics,
	Cracow University of Technology}  {\bf 6} (1995), 95-160


\bibitem[HY]{Hu+Yang_}C.-G Hu and C.-C. Yang \textsc{Vector-Valued Functions and their Applications}, Mathematics and Its Applications (Chinese Series) vo. 3, Kluwer Academic Publishers, 1992


\bibitem[LMS]{LMS_1981} P.L. Lions,  J.L. Menaldi and A-S. Sznitman,  \textit{Construction de processus de diffusion réfléchis par pénalisation du domaine.} (French) C. R. Acad. Sci. Paris Sér. I Math. 292, no. 11, 559--562 (1981)

\bibitem[LM]{LM_1984} P.L. Lions,   and A-S. Sznitman,  \textit{Stochastic differential equations with reflecting boundary conditions}, Comm. Pure Appl. Math. 37, no. 4, 511--537 (1984)

\bibitem[\L]{Lojasiewicz_1988} S. {\L}ojasiewicz, \textsc{An introduction to the theory of real functions.}
With contributions by M. Kosiek, W. Mlak and Z. Opial. Third edition.
Translated from the Polish by G. H. Lawden. Translation edited by A. V. Ferreira.
A Wiley-Interscience Publication. John Wiley \& Sons, Ltd., Chichester,  1988.

\bibitem[M]{Moreau_1987} J.J. Moreau, \textsc{Bounded Variation in Time. Topics in Nonsmooth Mechanics}, 1987, 9783764319076. hal-01363799

\bibitem[NP]{NP}D. Nualart and  E. Pardoux,  \textit{ White noise driven by
quaslinear SPDEs with reflection}. Probab. Theory Relat. Fields
93,77-89(1992).

\bibitem[P]{Pardoux79} E. Pardoux, \textit{Stochastic partial differential equations and filtering of diffusion processes}, Stochastics, Vol. 3, 127-167 (1979).



\bibitem[PR]{PR} C. Pr\'evot and M. R\"ockner, {\sc A concise course on stochastic partial differential equations}. Lecture Notes in Mathematics 1905, Springer Berlin 2007.


\bibitem[Pr]{Prouse_1992} G. Prouse,  \textit{On a Navier-Stokes type equation and inequality.} in
 \textsc{Partial differential equations, Part 1, 2 (Warsaw, 1990)},
 pp. 367--371, Banach Center Publ., 27, Part 1, 2, Polish Acad. Sci. Inst. Math., Warsaw,  1992.

 \bibitem[RT]{RT} M. R\"ockner and G. Trutnau, \textit{About the infinite dimensional skew and obliquely reflected Ornstein-Uhlenbeck process}. Infin. Dimens. Anal. Quantum Probab. Relat. Top. 18, no. 4, 1550031, 25 pp. (2015)


\bibitem[R]{Rudin_1987_RCA} W. Rudin,  {\sc Real and complex  analysis}. Third edition.  McGraw-Hill, Inc., New York, 1987.


\bibitem[S]{Schmalfuss_1997} B. Schmalfuss,
                   \textit{ Qualitative properties for the stochastic Navier-Stokes equations},
                  Nonlinear Anal., Vol 28, No.9, 1545-1563 (1997).

\bibitem[T1]{Tenam1995} R. Temam, \textsc{Navier-Stokes equations and
nonlinear functional analysis. Second edition.}
CBMS-NSF Regional Conference Series in Applied Mathematics, 66.
Society for Industrial and Applied Mathematics (SIAM), Philadelphia,
PA,1995.


\bibitem[T]{Temam_2001} R. Temam,
 {\sc Navier-Stokes Equations. Theory and numerical analysis}, Reprint of the 1984 edition. AMS Chelsea Publishing, Providence, RI, 2001



\bibitem[XZh]{XZ} T. Xu and T. Zhang,  \textit{White noise driven SPDEs with reflection:
existence, uniqueness and large deviation principles}. \,Stochastic Processes and Their Applications 119:10, 3453-3470  (2009)


\bibitem[Zam]{ZA} L. Zambotti, \textit{ A reflected stochastic heat equation as symmetric dynamics with respect to the 3-d Bessel bridge}. Journal of Functional Analysis 180, 195-209 (2001)

\bibitem[Zh]{Z} T. Zhang,  \textit{White noise driven SPDEs with reflection:
strong Feller properties and Harnack inequalities}, Potential Analysis 33:2 (2010) 137-151.

\end{thebibliography}
\end{document}